\newtheorem{thm}{Theorem}[section]
\newtheorem{prop}[thm]{Proposition}
\newtheorem{defn}[thm]{Definition}
\newtheorem{lem}[thm]{Lemma}
\newtheorem{rmk}[thm]{Remark}
\newcommand{\C}{\mathbb{C}}
\newcommand{\R}{\mathbb{R}}
\newcommand{\M}{\mathcal{M}}
\newcommand{\Morse}{{\rm Morse}}
\newcommand{\deldel}[2]{\frac{\partial {#1}}{\partial {#2}}}
\newcommand{\del}[1]{{\partial_{#1}}}
\newcommand{\st}{{\rm st}}
\newcommand{\ev}{{\rm ev}}
\newcommand{\cont}{{\rm cont}}
\newcommand{\indexof}{{\rm index}}
\newcommand{\Crit}{{\rm Crit}}
\newcommand{\be}{\begin{enumerate}}
\newcommand{\ee}{\end{enumerate}}
\begin{document}

\title{Correlators and Descendants of Subcritical Stein Manifolds}

\author{Jian He}
\address{Universit\'{e} Libre de Bruxelles, Bruxelles, Belgium}
\email{jianhe@ulb.ac.be}
\date{June, 2012.}

\begin{abstract}
We determine contact homology algebra of a subcritical Stein-fillable 
contact manifold whose first Chern class vanishes. We also compute the
genus-$0$ one point correlators and gravitational descendants 
of compactly supported closed forms of their subcritical Stein fillings.
This is a step towards determining the full potential function of 
the filling as defined in \cite{EliashbergGiventalHofer}. These invariants 
also give a canonical presentation of the cylindrical contact homology. 
With respect to this presentation, we
determine the degree-$2$ differential in the Bourgeois--Oancea exact 
sequence of \cite{Oancea}. As a further application, we proved that if
a K\"{a}hler manifold $M^{2n}$ admits a subcritical polarization 
and $c_1$ vanishes in the subcritical complement, then $M$ is uniruled.
\end{abstract}

\maketitle

\section{Introduction} \label{section: introduction}

The theory of pseudoholomorphic curves has been a 
very active area of research since it was introduced
by Gromov to the study of symplectic manifolds. 
Eliashberg, Givental and Hofer \cite{EliashbergGiventalHofer} 
generalized this to open symplectic manifolds 
with contact type boundaries at infinity. 
These algebraic invariants arise from the structure of the moduli 
spaces of finite energy punctured holomorphic curves with prescribed
asymptotic behaviours. A lot of effort has been made towards the computation
of these invariants. The cylindrical
contact homology of the boundary of a subcritical Stein domain 
$(M^{2n},\partial M)$, whose first Chern class vanishes, was first computed
by Mei-Lin Yau \cite{Yau}. It is also an easy consequence of the results of
Bourgeois and Oancea \cite{Oancea}, relating cylindrical contact homology and
symplectic homology.

\begin{thm}[\cite{Yau}]\label{yau1}
Let $(M^{2n},\partial M)$, $n\geq 2$, be a subcritical Stein domain of finite type, 
and $\xi$ the maximal complex subbundle on $\partial W$. 
If $c_{1}(\xi) = 0$, then 
$$HC_{i}(\partial M, \xi) \cong \bigoplus_{m=0}^{\infty} H_{2(n+m-1)-i}(M) \cong \bigoplus_{m=0}^{\infty} H_{i+2-2m}(M,\partial M).$$
\end{thm}

Given a graded vector space $V$, let $\Lambda(V)$ denote the tensor algebra generated by 
$V$ modulo the graded commuting relations, and $V[n]$ the graded vectors space consisting of elements of $V$ with a grading shift of positive $n$.

The conclusion of Theorem \ref{yau1} can be rewritten as 
$$HC(\partial M)\cong \bigoplus_{m=0}^{\infty} H(M,\partial M)[2m-2].$$

Under the same vanishing first Chern class condition, we first determine the rational contact 
homology algebra of $\partial M$, $HC^{\cont}(\partial M)$. In the language of 
\cite{EliashbergGiventalHofer}, this is the full contact homology algebra specialized 
at the origin. 

\begin{thm}\label{contacthomology}
Let $M^{2n}$, $n\geq3$, be a subcritical Stein domain of finite type with $c_1(M) = 0$. 
Then
$$HC^{\cont}(\partial M)\cong 
\Lambda\left(\bigoplus_{m=0}^{\infty} H(M,\partial M)[2m-2]\right).$$
\end{thm}

Essentially this means that the higher differentials in the contact homology algebra, 
coming from rational curves with multiple negative punctures, 
has no affect on homology even though they may be non-trivial on chain level. In fact
the higher differentials are zero provided the existence of a regular equivariant 
perturbation. However this requires more detail understanding of the polyfold 
perturbation theory, and we will not prove it in this paper. 

We then compute the genus-$0$ one point correlators and gravitational 
descendants of the subcritical Stein domain.

\begin{thm}\label{main}
Let $M^{2n}$, $n\geq3$, be a subcritical Stein manifold of finite type with $c_1(M) = 0$. 
Then there exists a canonical isomorphism 
$$\iota\colon HC(\partial M) \longrightarrow \bigoplus_{m=0}^{\infty} H(M,\partial M)[2m-2]$$ 
such that if $a = [\sum_{i=1}^{k} c_{i}\gamma_{i}] \in HC(\partial M)$ is a cycle in 
cylindrical contact homology, 
and $\theta$ is a compactly supported closed form on $M$, then the correlators 
and descendants satisfy
\begin{equation}\label{desc} 
\sum_{i=1}^{k}c_{i}\int_{\mathcal{M}_{\gamma_i}}\ev^*(\theta)\wedge \psi^{m} = \frac{1}{m!}
<\iota_m(a),[\theta]>.
\end{equation} 
\end{thm}
Here the notations are as follows:
\begin{itemize}
\item $\iota$ will be the same isomorphism as in Theorem \ref{yau1}; 
\item $\mathcal{M}_{\gamma}$ is the moduli space of holomorphic planes 
with one interior marked point, and asymptotic to the Reeb orbit $\gamma$;
\item $\ev\colon \mathcal{M}_{\gamma} \rightarrow M$ is the evaluation map at 
the marked point; 
\item $\iota_m\colon HC(\partial M) \rightarrow H(M,\partial M)$ is 
the projection of $\iota$ onto the $m$-th factor of the direct sum, which is then identified with 
$H(M,\partial M)$, ignoring the shift in grading; 
\item $\psi$ is the first Chern class of the tautological line bundle over the moduli 
space of holomorphic planes with one marked point; 
\item $<,>$ is the natural pairing between homology and cohomology of compact support. 
\end{itemize}
The precise definition and invariance of correlators and gravitation descendants will be discussed 
in the next section. Abusing notation, the linear combination of integrals on the left hand side of 
\eqref{desc} is often written simply as $\int_{\M_a}\ev^*(\theta)\wedge \psi^{m}$.

\begin{rmk}
\emph{
In \cite{Yau}, Theorem \ref{yau1} was proved by choosing a suitable 
contact form $\alpha$, where the differential for $HC(\partial M, \alpha)$ 
is explicitly identified with a Morse differential on $M$. It was unknown if an
isomorphism constructed this way is canonical. In other words, if $\beta$ is 
another suitable contact form with isomorphism $\iota'$, and 
$\Phi\colon HC(\partial M, \alpha) \rightarrow HC(\partial M, \beta)$
the natural isomorphism between cylindrical contact homologies, does 
$\iota$ equal $\iota' \circ \Phi$?
Since correlators and descendants behave naturally under change 
of contact form, Theorem \ref{main} therefore implies that $HC(\partial M)$ is canonically
isomorphic to $\bigoplus_{m=0}^{\infty} H(M,\partial M)[2m-2]$. Given a basis 
$\{\theta_i\}$ of $H^*(M,\partial M)$, the $m$-th coordinate in the direct sum of 
a element $a\in HC(\partial M)$ is determined by the values of 
$\{\int_{\M_a}\ev^*(\theta_i)\wedge \psi^{m}\}$.
}
\end{rmk}

In \cite{Oancea}, Bourgeois and Oancea gave a different proof of Theorem \ref{yau1}
using their long exact sequence relating cylindrical contact homology of
$\partial M$ and symplectic homology of $M$. We 
explicitly compute the degree $2$ map $D$ appearing in their exact triangle 
$$SH^+(M) \longrightarrow HC(\partial M) \stackrel{D}{\longrightarrow} HC(\partial M)
\longrightarrow SH^{+}(M).$$

\begin{thm}\label{coherent}
Let $M^{2n}$, $n\geq3$, be a subcritical Stein manifold of finite type with $c_1(M) = 0$. 
Then the map 
$$\iota\circ D\circ \iota^{-1}\colon \bigoplus_{m=0}^{\infty} H(M,\partial M)[2m-2] \rightarrow \bigoplus_{m=0}^{\infty} H(M,\partial M)[2m-2]$$ is given by shifting each factor down one spot,
with ${\rm Ker}(\iota\circ D\circ \iota^{-1}) = H(M,\partial M)[-2]$.
\end{thm}

\begin{rmk}
\emph{Together with Proposition 9 and Remark 19 of \cite{Oancea}, Theorem \ref{coherent}
implies that the isomorphism in the Bourgeois--Oancea proof of Theorem \ref{yau1} is the same
as the canonical isomorphism $\iota$.
}
\end{rmk}

In \cite{BiranCieliebak}, Biran and Cieliebak studied 
\emph{subcritical polarization} of a
K\"{a}hler manifold and asked if 
manifolds admitting subcritical polarizations are
always uniruled. A subcritical polarization is a K\"{a}hler manifold $(M,\omega,J)$ with an
integral K\"{a}hler form $\omega$, together with a smooth reduced complex 
hypersurface $\Sigma$ representing the Poincar\'{e} dual of $k[\omega]$, 
such that the complement of $\Sigma$ is a subcritical
Stein manifold of finite type.  As a consequence of Theorem \ref{main} and a Morse--Bott computation 
of correlators on the normal bundle of $\Sigma$, we obtain a
partial answer to the question of Biran and Cieliebak:

\begin{thm}\label{unirule}
If $(M^{2n},\omega,J,\Sigma,k)$ is a subcritical polarization such that 
$c_{1}(M\setminus \Sigma) = 0$, then $k=1$ and $M$ is uniruled.
\end{thm}

There are only two subcritical polarizations in complex dimension $\leq 2$: 
$(\mathbb{C}P^{1},{\rm pt})$ and 
$(\mathbb{C}P^2, \mathbb{C}P^1)$. Therefore throughout the rest of the paper, we assume $n\geq 3$.
 
This paper is organized as follows:
in section $2$ we review some basic facts of symplectic field
theory, and define the relevant invariants; in section $3$
subcritical Stein manifolds and their Reeb dynamics are described in more detail,
essentially summarizing the previous work of Yau, and we prove Theorems \ref{contacthomology} and 
\ref{main} modulo the technical Proposition \ref{tech}; in section $4$ we apply Theorem \ref{main}  to
subcritical polarizations; in section $5$ we prove Proposition \ref{tech}; and in the last section
we discuss connections with the Bourgeois--Oancea exact sequence and prove Theorem \ref{coherent}.

\section{Symplectic Field Theory and Descendants}\label{definitions}

In this section we will give an extremely brief overview of 
aspects of symplectic field theory and define the invariants
we wish to compute. See \cite{EliashbergGiventalHofer} for a
more complete discussion. Throughout this section we
assume the polyfold theory of Hofer, Zehnder and Wysocki,
\cite{polyfolds}, \cite{polyfolds2}, \cite{polyfolds3}, which forms the
analytical foundation of SFT. In other words, there 
exists a abstract perturbation scheme under which all moduli spaces 
are branched manifolds with boundaries and corners of the expected dimension. 

Let $(V^{2n-1},\xi)$ be a contact manifold with a contact $1$-form $\alpha$, i.e.,
$(d\alpha)^{n-1}\wedge \alpha$ is a volume form and $\xi = \mbox{Ker}(\alpha)$. The 
\emph{Reeb vector field} is the unique vector field $R$ such that
$$ d\alpha (R,-) = 0,\ \ \ \ \ \alpha(R)=1.$$
The flow of the Reeb vector field preserves the contact structure $\xi$. A (possibly multiply covered) 
Reeb orbit $\gamma$ is \emph{non-degenerate} if the linearized Poincar\'{e} return map of the Reeb flow 
has no eigenvalue equal to $1$. For a generic choice of $\alpha$, there are countably many closed Reeb
orbits, all of which are non-degenerate. Let $\kappa_{\gamma}$ denote the multiplicity of the orbit $\gamma$.

\begin{defn}
\emph{
A Reeb orbit is \emph{good} if it is not an even multiple of another orbit $\gamma$ such that the linearized Poincar\'{e}
return map along $\gamma$ has an odd total number of eigenvalues (counted with multiplicity) in the interval $(-1,0)$.}
\end{defn}

\begin{rmk}
\emph{All orbits appearing in this paper are easily seen to be good.
}
\end{rmk}
If $c_1(\xi)=0$, then the Reeb orbits admit a consistent $\mathbb{Z}$-grading.
Define the index of a Reeb orbit $\gamma$ to be $\overline{\mu}({\gamma}) = \mu({\gamma})+(n-3)$, where $\mu({\gamma})$ is the
Maslov index of $\gamma$, computed using a trivialization over a spanning surface for $\gamma$. Since the
first Chern class vanishes, the Maslov index does not depend on the choice of spanning surface.
From now on we will assume $c_1(\xi) = 0$.

The \emph{symplectization} of a contact manifold $(V^{2n-1},\xi, \alpha)$ is 
the manifold $V\times \mathbb{R}$ with the 
symplectic form $d(e^t\alpha)$, where $t$ is the coordinate of $\mathbb{R}$. 
An almost complex structure $J$ on a symplectization 
$(V\times \mathbb{R}, d(e^t\alpha))$ is \emph{compatible} if 
\begin{itemize}
\item $J^2 = -\mbox{Id}$, 
\item $d\alpha(v, Jv)>0$ for all non-zero $v\in \xi$,
\item $J$ is invariant under translation in the $\mathbb{R}$-direction, 
\item $J\xi = \xi$, and $J\del{t} = R$.
\end{itemize}

A \emph{symplectic filling} $(M,\omega)$ of a contact manifold $(V,\xi,\alpha)$ is an open symplectic manifold 
with one open cylindrical end of the form $E = V\times [0,\infty)$.
On the cylindrical end, $\omega|_{E} = d(e^t\alpha)$. The complement of $E$ is compact.
We will often abuse notation and refer to $V$ as $\partial M$.
An almost complex structure $J$ on a filling $(M,\omega)$ is \emph{compatible} if
\begin{itemize} 
\item $J^2=-\mbox{Id}$, 
\item $\omega(v,Jv)>0$ for all non-zero $v\in TW$,
\item on the cylindrical end $E$, $J$ is invariant under translation in the $\mathbb{R}$-direction, 
\item on $V=V\times\{0\}$, $J\xi = \xi$, and $J\del{t} = R$.
\end{itemize}

A \emph{symplectic cobordism} $(M,\omega)$ is an open symplectic manifold with a 
positive cylindrical end $V\times [0,\infty)$ and a negative end $V'\times (-\infty,0]$. 
It is often denoted by $\overrightarrow{VV'}$.
Compatible complex structures are defined in similar fashion. 
 
A \emph{$J$-holomorphic curve} is a map $u$ from a punctured Riemann surface $(\Sigma,i)$
to an almost complex manifold $(M, J)$ such that $$du\circ i = J\circ du.$$ 
Two holomorphic curves $u\colon (\Sigma,i) \rightarrow (M,J)$ and $v\colon (\Sigma',j) \rightarrow (M,J)$ 
are \emph{equivalent} if there is a diffeomorphism $\phi\colon \Sigma\rightarrow \Sigma'$ such that
$u = v\circ \phi$ and $i = \phi^* j$. 
Denote by $\mathcal{M}_{\gamma; \gamma'}$ the moduli space of equivalence classes of holomorphic 
cylinders in the the symplectization $V\times \mathbb{R}$ such that the positive end of the
cylinder is asymptotic to the Reeb orbit cylinder $\gamma\times \mathbb{R}$ and the negative end to 
$\gamma'\times \mathbb{R}$.

The expected dimension of $\mathcal{M}_{\gamma; \gamma'}$ is 
$${\rm dim \ } \mathcal{M}_{\gamma; \gamma'} = \overline{\mu}({\gamma}) -\overline{\mu}({\gamma'}).$$ 
We will assume that $J$ is \emph{regular} (or a suitable abstract perturbation has been performed), 
so that all moduli spaces are branched manifolds of the expected dimension.
Since $J$ is $\mathbb{R}$-invariant, holomorphic curves come in $\mathbb{R}$-families as well.
If $\mathcal{M}_{\gamma; \gamma'}$ is $1$-dimensional, then we can count the number of $\mathbb{R}$-components 
of $\mathcal{M}_{\gamma; \gamma'}$.

\begin{defn}
\emph{
The \emph{cylindrical contact homology} of $V$, $HC(V)$, is the homology of 
the chain complex of the module generated by the good Reeb orbits. 
The differential is given by
$$\partial \gamma = \kappa_{\gamma}\sum_{\gamma'\colon \overline{\mu}({\gamma'})=\overline{\mu}({\gamma})-1} n_{\gamma, \gamma'}\gamma',$$
where $n_{\gamma,\gamma'} = \#(\mathcal{M}_{\gamma; \gamma'}\slash \mathbb{R})$. 
}
\end{defn}
 
As observed in \cite{EliashbergGiventalHofer} and \cite{Yau}, cylindrical contact homology is well defined in certain situations.

\begin{thm}\label{-101}
If $(V,\xi,\alpha)$ has no Reeb orbits of index $-1$, $0$ and $1$, then $\partial^2 =0$, and
$HC(V,\xi)$ is independent of choice of such $\alpha$'s.
\end{thm}

\begin{rmk}
\emph{
To understand the natural isomorphism between cylindrical contact homologies, let $\beta = e^f\alpha$
be another contact form for $(V,\xi)$. In the symplectization of $(V,\alpha)$, the section $V' = \{(p, e^f(p))\colon p\in V\}$ has
induced contact form $\beta$. The symplectization can be regarded as a symplectic cobordism between $(V',\beta)$ and $(V,\alpha)$.
The count of $0$-dimensional moduli spaces of holomorphic cylinders asymptotic to Reeb orbits in $V'$ and $V$ defines a chain 
map which induces isomorphism on homology.
}
\end{rmk}

A slight generalization of cylindrical contact homology is well defined for all $(V,\xi,\alpha)$.
Instead of only cylinders, rational curves with arbitrarily many negative punctures are allowed. Let 
$\mathcal{M}_{\gamma; \gamma_1,\dots, \gamma_k}$ denote the moduli space of genus-$0$ holomorphic
curves with a positive puncture asymptotic to $\gamma$ and several negative
punctures asymptotic to $\{\gamma_{1},\ldots,\gamma_{k}\}$. The expected dimension of
$\mathcal{M}_{\gamma; \gamma_1,\dots, \gamma_k}$ is 
$${\rm dim \ } \mathcal{M}_{\gamma; \gamma_1,\dots, \gamma_k} = 
\overline{\mu}({\gamma}) - \sum_{i=1}^{k}\overline{\mu}({\gamma_i})$$

\begin{defn}
\emph{
The \emph{contact homology algebra} of $V$, $HC^{\cont}(V)$, is the homology of 
the chain complex of the graded commutative algebra generated by the good Reeb orbits.
The differential of a single orbit is given by 
$$\partial \gamma = \kappa_{\gamma}\sum
n_{\gamma; \gamma_1,\dots, \gamma_k} \gamma_{1}\dots \gamma_{k}$$ 
where $n_{\gamma; \gamma_1,\dots, \gamma_k} = \#(\mathcal{M}_{\gamma; \gamma_1,\dots, \gamma_k}\slash \mathbb{R})$,
and the sum is taken over all sets of orbits $\{\gamma_1,\dots,\gamma_k\}$ 
such that $\overline{\mu}({\gamma}) - \sum_{i=1}^{k}\overline{\mu}({\gamma_i})=1$.  The differential
is extended to the entire algebra by Leibniz rule.
}
\end{defn}

\begin{rmk}
\emph{In the language of \cite{EliashbergGiventalHofer}, the contact homology algebra defined here is 
the specialization of their more general $H_*^{\cont}(V)$ at $t=0$.
}
\end{rmk}

\begin{rmk}\label{carpet}
\emph{To simplify exposition we ignored asymptotic markers in our definitions. Instead we hide it in 
the definition of $n_{\gamma; \gamma_1,\dots, \gamma_k}$. Curves in 
$\mathcal{M}_{\gamma; \gamma_1,\dots, \gamma_k}\slash \mathbb{R}$ need to be counted with consistent weights,
and the asymptotic makers contribute a certain combinatorial factor depending on the multiplicities of the asymptotic
Reeb orbits.
}
\end{rmk}

The natural isomorphism between contact homology algebras for different contact forms is a 
direct generalization of
the cylindrical case. The chain map is given by the count of $0$-dimensional moduli spaces of 
genus zero curves with one positive and several negative ends in a cobordism interpolating between 
the contact forms.

\begin{rmk}
\emph{Given a symplectic filling $M$ of $V$, there is a natural \emph{augmentation} on the 
differential graded algebra for the contact homology algebra of $V$, coming from the
count of rigid holomorphic planes in $M$ asymptotic to Reeb orbits on $V$. Using this augmentation
one can defined the \emph{linearized contact differential} and the \emph{linearized
contact homology} of $V$ with respect to the filling $M$. See \cite{surgeryformula} and \cite{Oancea} for detailed 
definitions and discussion. If the cylindrical contact homology of $V$ is well defined, 
and there is no rigid 
holomorphic planes in $M$, then the cylindrical contact homology of $M$ is the same as the
linearized contact homology of $V$ with with respect to $M$. This will be the case for subcritical
Stein fillings.
}
\end{rmk}

\begin{rmk}\label{homologygrade}
\emph{
Both $HC(V)$ and $HC^{\cont}(V)$ are naturally graded by $H_1(V)$, given by the sum of the 
homology classes of the orbits. 
}
\end{rmk}

Suppose $(M,\partial M)$ is a symplectic filling with $c_1(M)=0$.
Let $\mathcal{M}_{\gamma}$ denotes the moduli space of holomorphic planes in $M$
asymptotic to the Reeb orbit $\gamma$ with one interior marked point $x$. It has expected dimension
$${\rm dim \ } \mathcal{M}_{\gamma} = \overline{\mu}({\gamma}) + 2$$
Let $\theta\in H^*(M,\partial M)$ be a compactly supported closed form on $M$, and $\ev\colon u \rightarrow u(x)$ be the evaluation map at the marked point.

\begin{defn}
\emph{
A \emph{genus-$0$ one point correlator} is an integral of the form
$$\int_{\mathcal{M}_{\gamma}}\ev^*(\theta)$$
}
\end{defn}

We may interpret the correlator as the intersection number of 
$\ev(\mathcal{M}_{\gamma})$ with the Poincar\'{e} dual
of $\theta$. However, in general $\mathcal{M}_{\gamma}$ has 
codimension-$1$ boundary strata, therefore
the value of the correlator depends on $\theta$, $J$ and $\alpha$. One way to have
an invariant is to take a linear combination of moduli spaces so that their codimension-$1$ boundaries cancel.

\begin{prop}\label{invariantcor}
Let $(M, \partial M)$ be a symplectic filling with a contact form $\alpha$ on $\partial M$ such that all
Reeb orbits have index at least $2$. If $a = [\sum_{i=1}^{k} c_{i}\gamma_{i}] \in HC_{m}(\partial M)$ is a cycle
in cylindrical contact homology, and $\theta$ is a compactly supported closed $(m+2)$-form, then the value of the linear combination of correlators $$\sum_{i=1}^{k}c_{i}\int_{\mathcal{M}_{\gamma_i}}\ev^*(\theta)$$
is independent of all choices. 
\end{prop}
\begin{proof}
The proof by dimension count is the same as that of Theorem \ref{-101}. 
By the compactness theorem of \cite{compactness},
a codimension-$1$ stratum of $\mathcal{M}_{\gamma}$
consists of $2$-story curves $(u_1, u_2)$, such that $u_1\in \mathcal{M}_{\gamma; \beta_1,\dots, \beta_l}$ 
is a genus-$0$ holomorphic curve in the symplectization $\partial M\times \mathbb{R}$ with several negative
ends; and $u_2$ consists of $l$ holomorphic planes in the filling $M$, one asymptotic to each $\beta_i$. 
The marked point can be located on $u_1$ or on any one of the holomorphic planes 
of $u_2$. Consider
the image of such a stratum under the evaluation map. If the marked point is on $u_1$, 
then it is mapped to infinity (in other words to $\partial M$). Suppose the marked point lies on the
plane asymptotic to $\beta_1$. If $l>1$, then since all orbits have index at least $2$, 
$$\overline{\mu}({\beta_1})\leq \overline{\mu}({\gamma}) - 1 - \overline{\mu}({\beta_2}) - \dots -\overline{\mu}({\beta_l})
\leq \overline{\mu}({\gamma}) - 3 = m - 3$$
Therefore this stratum is mapped under the evaluation map to a 
chain of dimension $\overline{\mu}({\beta_1})+2 \leq m - 1$, which is of codimension
at least $3$. The only codimension-$1$ strata must have $u_1$
an index-$1$ holomorphic cylinder between $\gamma$ and $\gamma'$, and $u_2$ a holomorphic plane asymptotic to $\gamma'$ with
one marked point. By the definition of the cylindrical contact homology differential, these strata cancel out 
for a cycle in cylindrical contact homology (which is well defined by Theorem \ref{-101}). Therefore $\sum c_i\, \ev(\mathcal{M}_{\gamma_i})$ is a cycle
in $H_{m+2}(M,\partial M)$. Furthermore since the other boundary strata are of codimension at least $3$, the homology
class of this cycle is invariant under homotopies of $J$, $\alpha$, and all other choices.
Hence $\sum_{i=1}^{k}c_{i}\int_{\mathcal{M}_{\gamma_i}}\ev^*(\theta)$ is an invariant.
\end{proof}

\begin{rmk}
\emph{
This condition on the minimum index of Reeb orbits will be satisfied for the contact forms we consider for a subcritical Stein filling.
}
\end{rmk}

To understand gravitational descendants in SFT, let us first recall their definition in Gromov--Witten theory. 
Let $\mathcal{M}_{g,n}$ denote the compactified moduli space of nodal holomorphic 
maps $u$ from a genus $g$
Riemann surface with $n$ internal marked points, $(\Sigma; p_1,\dots, p_n)$, 
to a closed symplectic manifold $(M,\omega)$ with a compatible almost complex structure $J$.
At each element
$(u,\Sigma; p_1,\dots, p_n)$ of $\mathcal{M}_{g,n}$, the cotangent space to $\Sigma$ at the point $p_i$ is
a complex line, they patch together to form a line bundle $L_i$ over $\mathcal{M}_{g,n}$,
called the $i$-th tautological line bundle. Denote its first Chern class by $\psi_i = c_1(L_i)$.

There is a more geometrical interpretation of $L_i$. Consider the embedding 
$f_i\colon \mathcal{M}_{g,n}\rightarrow \mathcal{M}_{g,n+1}$, adding a 
ghost bubble at $p_i$. In other words, the domain $\widetilde{\Sigma}$ of
$f_i(\Sigma; p_1,\dots, p_n, u)$ is the nodal curve $\Sigma\cup \mathbb{C}P^1$, where $\mathbb{C}P^1$
is attached at $p_i$; the marked points on $\widetilde{\Sigma}$ are located at their original positions
on $\Sigma$, except for $p_i$, which is now on $\mathbb{C}P^1$ together with $p_{n+1}$; the map 
$\tilde{u}$ equals $u$ on $\Sigma$, and is the constant map $u(p_i)$ on $\mathbb{C}P^1$. Then $L_i$ 
is the pullback of the dual of the normal bundle of $\mathcal{M}_{g,n}$ in 
$\mathcal{M}_{g,n+1}$.

Let $\{\theta_i\}_{i=1}^{n}$ be closed forms on $M$ of compact support, then a \emph{gravitational descendant} is an integral of the form 
\begin{equation*}\label{eq:desc}
\int_{\mathcal{M}_{g,n}}\ev_1^*(\theta_1)\wedge \psi_1^{l_1}\wedge\cdots\wedge\ev_n^*(\theta_n)\wedge \psi_n^{l_n}.
\end{equation*}
Its value depends only on the cohomology classes of $\{\theta_i\}_{i=1}^{n}$.

From now on let us restrict to rational curves with one marked point. 
For exactly the same reason as correlators, the values of the gravitational 
descendants in SFT will depend on the actual form $\theta$, as well as all choices 
made in the perturbation theory.

In Gromov--Witten theory, the $\psi$ class can be interpreted as the zero set 
of a generic section $s$ of the tautological line bundle. We can do the same
in the SFT setting. The $\psi$ class can be interpreted as the zero sets of 
a collection of generic coherent sections $\{s(\mathcal{M}_{\gamma})\}$ 
over all moduli spaces $\mathcal{M}_{\gamma}$, 
which is compatible with the restriction maps to boundary strata. 
For example, suppose a boundary stratum of $\mathcal{M}_{\gamma}$
is of the form $\mathcal{M}_{\gamma; \gamma'}\times \mathcal{M}_{\gamma'}$, 
then $s(\mathcal{M}_{\gamma})$ restricted 
to $\mathcal{M}_{\gamma; \gamma'}\times \mathcal{M}_{\gamma'}$ is the pullback of $s(\mathcal{M}_{\gamma'})$ under
the projection map $\mathcal{M}_{\gamma; \gamma'}\times \mathcal{M}_{\gamma'} \rightarrow \mathcal{M}_{\gamma'}$.
Higher powers $\psi^{l}$ can be inductively defined as the zero 
sets of generic coherent sections of $L^{\otimes l}$ over the zero sets representing $\psi^{l-1}$, weighted by 
a factor of $\frac{1}{l}$, since $c_1(L)=\frac{1}{l}c_1(L^{\otimes l})$.
For a detailed treatment of coherent sections, see \cite{Fabert}. 
Note that coherent collections can always be constructed inductively. 

Similar to Proposition \ref{invariantcor}, by dimension count we see that certain linear
combinations of descendants are numerical invariants of the symplectic filling. The proof
is almost identical, just replace all moduli spaces by the zero sets of suitable coherent 
sections.

\begin{prop}
Let $(M, \partial M)$ be a symplectic filling with a contact form $\alpha$ on $\partial M$ such that all
Reeb orbits have index at least $2$. If $a = [\sum_{i=1}^{k} c_{i}\gamma_{i}] \in HC_{m+2l}(\partial M)$ is a cycle
in cylindrical contact homology, and $\theta$ is a compactly supported closed $(m+2)$-form, then the value of the linear 
combination of descendants $$\sum_{i=1}^{k}c_{i}\int_{\mathcal{M}_{\gamma_i}}\ev^*(\theta)\wedge \psi^{l}$$
is independent of all choices. 
\end{prop}

As pointed out in \cite{Pandharipande}, gravitational descendants are very closely related to 
counting curves with certain ramification conditions at the marked point. 

Let $(M^{2n},\partial M,J)$ be an exact symplectic filling with a compatible almost
complex structure $J$. Suppose that, for the sake of simplicity, there is a point 
$p\in M$ such that $J$ is standard in a neighbourhood of $p$. 
Let $\M_{\gamma}(p)$ denote the moduli 
space of holomorphic planes with one marked point asymptotic to a 
Reeb orbit $\gamma$ on $\partial M$, such that the marked point is mapped 
to $p$. Choose a neighbourhood, $B_p$ of $p$, biholomorphic
to the unit ball in $\C^n$, and a complex direction $\C\subset \C^n$. Let $\pi$ be
the orthogonal projection from $B_p$ to $\C$.

The domain of the holomorphic curves, $\C$ with one marked point, is not stable. We can normalize so
that the marked point is $0$. The automorphism group is exactly $\C$. The addition of an extra marked 
point makes the domain stable. We will call the space of holomorphic planes with one marked 
point the \emph{unparametrized} curves, and the space of holomorphic planes with two marked points
the \emph{parametrized} curves.
The tautological line bundle $L$ over $\M_{\gamma}(p)$ is then the dual of the bundle of 
parametrized curves over unparametrized curves. The fiber over each element $u$ of
$\M_{\gamma}(p)$ consists of a holomorphic map $\tilde{u}\colon \C \rightarrow M$, together with the
$\C$-family of reparametrizations of $\tilde{u}$, $\{\tilde{u}(cz),c\in \C\}$. Note that $c=0$ corresponds
to the nodal curve $\C\cup \C P^1\rightarrow M$, where a constant ghost bubble is attached to $u$.

Observe that $\tilde{u} \rightarrow \frac{\partial}{\partial z}|_{z=0}(\pi\circ \tilde{u})$ is a section of the dual
of the tautological line bundle. Furthermore such sections form a coherent collection over different $\M_{\gamma}(p)$'s. The
zero set of this section consists of (unparametrized) holomorphic curves $u$ whose representative $\tilde{u}$,
after projection onto the chose $\C$ direction, has the form $z\rightarrow cz^{k}$ for some $k\geq 2$.
We will often suppress the chosen complex direction, and simply refer to this zero set as curves with 
ramification index $2$.

Similarly, over the curves with ramification index $2$, 
$\tilde{u}\rightarrow \frac{\partial^2}{\partial z^2}|_{z=0}(\pi\circ \tilde{u})$ is a section 
of $L^{\otimes 2}$. The zero set of this section is referred to as curves with ramification index $3$.

Therefore if $\theta$ is Poincar\'{e} dual to the point class, then 
$\int_{\mathcal{M}_{\gamma}}\ev^*(\theta)\wedge \psi^{l}$ can be interpreted as the count of holomorphic
planes passing through $p$ with ramification index $(l+1)$, divided by $l!$.

\begin{rmk}\label{ramcond}
\emph{The above construction can be made to work in more general settings. 
However for simplicity the following is sufficient for the purpose of this paper.
Suppose there exists an open set $L\subset M$ satisfying the following:
\begin{itemize}
\item $L$ is biholomorphic to a submanifold of a product almost complex manifold 
$M'\times \C$;
\item for each element of $H(M)$, there exists a cycle representative which lies entirely in $L$.
\end{itemize}
Then ramification index can be defined with respect to the vertical complex 
direction $\C$ as above, so that 
$\int_{\mathcal{M}_{\gamma}} (l!) \ \ev^*(\theta)\wedge \psi^{l}$ is the count of holomorphic 
planes passing through a cycle $\alpha\subset L$ Poincar\'{e} dual 
to $\theta$, with ramification index $(l+1)$. 
}
\end{rmk}

\section{Stein and Weinstein manifolds}

An open complex manifold $(M^{2n},J)$ is \emph{Stein} if
it can be realized as a properly embedded complex submanifold
of some $\mathbb{C}^{N}$. 
A smooth function $f\colon M\rightarrow \mathbb{R}$ is \emph{exhausting}
if it is proper and bounded from below. Let $d^{J}f$ denote
$df\circ J$. The function $f$ is
\emph{plurisubharmonic} if the associated 2-form $\omega_{f} = -dd^{J}f$
is a symplectic form taming $J$, i.e., $\omega_{f}(v,Jv)>0$ for every
non-zero tangent vector $v$. Plurisubharmonicity is an open condition.
We can therefore assume $f$ to be Morse. By a theorem of Grauert, an
open complex manifold is Stein if and only if it admits a plurisubharmonic
function.

A Stein manifold $(M^{2n},J)$ with an exhausting plurisubharmonic
function $f$ admits the following associated structures:

\begin{itemize}
\item a symplectic form $\omega_{f} = -dd^{J}f$ which is $J$-invariant,
\item a primitive $\alpha = -d^{J}f$,
\item a vector field $Y$ such that $\alpha=\iota_{Y}\omega$,
\item a metric $g(v,w)=\omega(v,Jw)$.
\end{itemize}

Since $L_{Y}\omega = \iota_{Y}d\omega + d(\iota_{Y}\omega) = d\alpha =\omega$,
the vector field $Y$ is Liouville, i.e., the flow of $Y$ expands the symplectic
form. In fact $Y$ is the gradient vector field of $f$ with respect to the metric $g$,
$$df=-df\circ J \circ J=\alpha \circ J= (\iota_{Y}\omega) \circ J=\iota_{Y}g.$$

The function $f$ can always be rescaled so that $Y$ becomes a complete 
vector field. We will assume all plurisubharmonic functions
produce complete Liouville vector fields unless otherwise specified. If
$M$ admits a plurisubharmonic Morse function $f$ with finitely many
critical points, then $M$ is of \emph{finite type}. The unstable
submanifold of each critical point of $f$ is an isotropic
submanifold with respect to the symplectic form $\omega_{f}$, so the
Morse index is no greater than the complex dimension of $M$. A Stein
manifold $M$ is \emph{subcritical} if it admits a plurisubharmonic
Morse function $f$ with all critical points having Morse index
strictly less than $n$.

From now on we will restrict ourselves to Stein manifolds of finite type. All plurisubharmonic
functions are assumed to be Morse unless otherwise stated. By a theorem of Eliashberg and Gromov,
a Stein manifold carries a canonical symplectic structure. A different choice of plurisubharmonic
function corresponds to a different choice of complete Liouville vector field $Y$ on the same
symplectic manifold.

\begin{thm}[\cite{EliashbergGromov}]
Let $f$ and $g$ be two plurisubharmonic functions which induce complete
Liouville vector fields on a Stein manifold $M$. Then the manifolds
$(M, \omega_{f})$ and $(M, \omega_{g})$ are symplectomorphic.
\end{thm}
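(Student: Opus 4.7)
The plan is to prove the symplectomorphism by a Moser-type deformation argument along a path of plurisubharmonic functions connecting $f$ and $g$. I break the strategy into three steps.

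First, I would form the linear interpolation $f_{t} := (1-t)f + tg$ for $t\in[0,1]$, and set $\alpha_{t} := -d^{J}f_{t}$ and $\omega_{t} := d\alpha_{t} = (1-t)\omega_{f} + t\omega_{g}$. The taming condition $\omega(v,Jv)>0$ is pointwise convex in $\omega$, so since both $\omega_{f}$ and $\omega_{g}$ tame $J$, each $\omega_{t}$ tames $J$ as well; in particular each $\omega_{t}$ is symplectic and each $f_{t}$ is plurisubharmonic. This provides a smooth path of symplectic forms from $\omega_{f}$ to $\omega_{g}$ with a canonical primitive $\alpha_{t}$.

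Second, I would apply the Moser trick: define the time-dependent vector field $X_{t}$ by the equation
$$\iota_{X_{t}}\omega_{t} = -\dot{\alpha}_{t} = d^{J}(f-g),$$
uniquely determined since $\omega_{t}$ is non-degenerate. A Cartan-formula computation shows $\frac{d}{dt}\phi_{t}^{*}\omega_{t} = \phi_{t}^{*}(d\iota_{X_{t}}\omega_{t} + d\dot{\alpha}_{t}) = 0$ along the flow $\phi_{t}$ of $X_{t}$; if this flow is defined on all of $M$ for $t\in[0,1]$, then $\phi_{1}^{*}\omega_{g} = \omega_{f}$, giving the desired symplectomorphism.

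The main obstacle, and the heart of the argument, is the global integrability of $X_{t}$: since $M$ is non-compact, there is a priori no reason the flow should not escape to infinity in finite time. This is precisely where the completeness hypothesis on $Y_{f}$ and $Y_{g}$ is used. For $c$ large enough, the Liouville flows identify the respective ends of $M$ outside the critical sets of $f$ and $g$ with cylindrical symplectizations $(V\times[0,\infty), d(e^{s}\alpha|_{V}))$. By reparametrizing $f$ and $g$ along their Liouville flows (which changes neither $\omega_{f}$ nor $\omega_{g}$ up to symplectomorphism) and interpolating inside a compact set, one can arrange that $f$ and $g$ agree on a neighborhood of infinity while the entire path $f_{t}$ remains plurisubharmonic. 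Once this is done, $\dot{\alpha}_{t}$ is compactly supported, and hence so is $X_{t}$; its flow is then automatically defined for all time. The hardest point of the argument is carrying out this matching at infinity while preserving the convexity of the taming condition throughout the deformation, and it is precisely here that the completeness of the Liouville vector fields is indispensable, since it guarantees the existence of arbitrarily large cylindrical ends in both structures on which the interpolation can be performed.
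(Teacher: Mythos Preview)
The paper does not prove this theorem: it is stated with a citation to Eliashberg--Gromov and then used as a black box, with no proof environment following it. There is therefore nothing in the paper to compare your argument against.

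Your Moser-deformation outline is the standard route to this result and the first two steps are correct as stated: plurisubharmonicity is convex, so the linear interpolation $f_t$ works, and the Moser vector field is well defined. The third step, however, is where the real content lies, and your treatment of it is more of a promissory note than an argument. Saying one can ``reparametrize $f$ and $g$ along their Liouville flows'' and ``interpolate inside a compact set'' to make them agree near infinity, \emph{while keeping the whole path plurisubharmonic}, hides a genuine lemma. Modifying $f$ near infinity changes $\omega_f$, so one must separately show the modified form is still symplectomorphic to the original; the usual way is to first prove the special case where $f$ and $g$ agree outside a compact set (where your Moser argument goes through directly since $X_t$ is compactly supported), and then reduce to it by showing that replacing $f$ by a suitable convex reparametrization $h\circ f$ does not change the symplectomorphism type --- this last step being where completeness of the Liouville flow is actually used. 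Your sketch gestures at exactly this, and you correctly flag it as the hardest point, but a reader could not reconstruct the reduction from what you wrote.
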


Two Stein structures $(M, J_{0})$ and $(M, J_{1})$ are
\emph{Stein homotopic} if there is a continuous family
of Stein structures $(M, J_{t})$ with exhausting
plurisubharmonic functions $f_{t}$ such that
the critical points of $f_{t}$ stay in some compact
subset during the homotopy. Two Stein manifolds
$(M_{0}, J_{0})$ and $(M_{1}, J_{1})$
are \emph{deformation equivalent} if there exists
a diffeomorphism $\phi\colon M_{0} \rightarrow M_{1}$
such that $(M_{0},J_{0})$ and $(M_{0},\phi^{*}J_{1})$
are Stein homotopic.

A Stein manifold is \emph{split} if it is of the form $(M'\times
\mathbb{C},J'\times i)$, where $(M',J')$ is Stein. For a split Stein
manifold we can use the plurisubharmonic function 
$f=f' + \frac{1}{4}(x_n^2+y_n^2)$, where $f'$ is a plurisubharmonic
function on $M'$ and $(x_n,y_n)$ is the Euclidean coordinate of
$\mathbb{C}$.

\begin{thm}[\cite{Cieliebak}]
Every subcritical Stein manifold is deformation
equivalent to a split one.
\end{thm}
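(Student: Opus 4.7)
The plan is to induct on the number of critical points of a subcritical exhausting plurisubharmonic Morse function $f$ on $M^{2n}$. After rescaling, assume $f$ has finitely many critical points with distinct critical values $c_{1}<\cdots<c_{m}$ and Morse indices $\ell_{1},\ldots,\ell_{m}$, each strictly less than $n$. Let $M_{k}=\{f\le c_{k}+\epsilon\}$, so that $M_{k}$ is obtained from $M_{k-1}$ by attaching a handle of index $\ell_{k}$ along an isotropic sphere $\Lambda_{k}\subset\partial M_{k-1}$ of dimension $\ell_{k}-1\le n-2$ in the $(2n-1)$-dimensional contact boundary. The base case is immediate: $M_{0}$ is a sublevel set below all critical points, Stein-homotopic to a ball $B^{2n}\subset \mathbb{C}^{n}=\mathbb{C}^{n-1}\times\mathbb{C}$, which is split.

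For the inductive step, assume $M_{k-1}$ is Stein-deformation equivalent to a split domain $W\times\mathbb{C}$, with the associated structures on $W\times\mathbb{C}$ being exactly those displayed in the excerpt. The goal is to find a Stein deformation sending $\Lambda_{k}$ to an isotropic sphere contained in a ``horizontal slice'' $\partial W\times\{z_{0}\}$, which is itself of contact type inside $\partial(W\times\mathbb{C})$. Because $\dim \Lambda_{k}\le n-2$ is strictly less than the critical isotropic dimension $n-1$, Gromov's h-principle for isotropic embeddings applies: isotropic embeddings of $\Lambda_{k}$ into $\partial(W\times\mathbb{C})$ are classified, up to isotropic isotopy, by their underlying formal isotropic data. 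In particular, using the trivial $\mathbb{C}$-direction one can formally homotope $\Lambda_{k}$ into the horizontal slice, and then upgrade the formal homotopy to a genuine isotropic isotopy. Once $\Lambda_{k}$ lies in $\partial W\times\{z_{0}\}$, attaching the handle in $W\times\mathbb{C}$ is, by the product formulas for $\omega$, $Y$, and $\alpha$, identified with attaching the corresponding handle inside $W$ and then taking the product with $\mathbb{C}$. This exhibits $M_{k}$ as deformation equivalent to $(W\cup\text{handle})\times \mathbb{C}$, completing the induction.

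The main obstacle is turning the boundary isotopy of $\Lambda_{k}$ into a Stein \emph{deformation} of the ambient filled manifold $M_{k-1}$, not merely a contactomorphism of $\partial M_{k-1}$. The standard remedy is to extend the contact isotopy to a compactly supported Hamiltonian/Liouville isotopy of a collar neighborhood of the boundary via Gray stability, then to promote this to a homotopy of plurisubharmonic functions on $M_{k-1}$ with critical points fixed in a compact set, using the flexibility of subcritical isotropic discs to interpolate $J$. The subcriticality hypothesis is sharp precisely at this step: once $\ell_{k}=n$, isotropic spheres of dimension $n-1$ are Legendrian, the h-principle fails, and the sphere genuinely records information that cannot be split off the $\mathbb{C}$-factor. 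After the induction terminates, the resulting split Stein structure extends to the full completion of $M$, and by the Eliashberg--Gromov theorem quoted above the ambiguity in the choice of plurisubharmonic function is absorbed, yielding Stein-deformation equivalence between $M$ and a split Stein manifold.
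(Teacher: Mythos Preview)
The paper does not give its own proof of this theorem: it is quoted from \cite{Cieliebak} and used as a black box, so there is nothing in the paper to compare your argument against. Your outline is, in broad strokes, the strategy of Cieliebak's proof --- induct on handles and use the h-principle for subcritical isotropic embeddings to push the attaching sphere into a slice coming from the $W$-factor.

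That said, as written your sketch has real gaps that Cieliebak's paper spends most of its length on. First, the base case is mis-stated: a sublevel set below \emph{all} critical values of an exhausting function is empty, not a ball; you want the sublevel just above the index-$0$ point. Second, the geometry of ``$\partial W\times\{z_{0}\}$'' inside $\partial(W\times\mathbb{C})$ is not what you describe: the contact boundary of the split domain is a stabilized level set $\{\phi'+\kappa|z|^{2}=c\}$, not a product, and identifying the correct contact-type slice into which $\Lambda_{k}$ should be isotoped requires care. Third, the h-principle input must match not only the isotropic sphere but also the trivialization of its conformal symplectic normal bundle, which governs the handle attachment; you gesture at ``formal isotropic data'' but do not verify that the framing can be carried along. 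Finally, the passage ``extend the contact isotopy to a homotopy of plurisubharmonic functions'' is exactly the hard part; Gray stability gives a contactomorphism of the boundary, but promoting this to a Stein homotopy of the filling with controlled critical locus is the substance of Cieliebak's argument and cannot be dispatched in a sentence.
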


If $(M_{0},J_{0})$ and $(M_{1},J_{1})$ are
deformation equivalent, then $(M_{0},\omega_{f_0})$ and
$(M_{1},\omega_{f_1})$ are symplectomorphic.
Hence we will always treat a subcritical Stein manifold
as split.

It turns out that the existence of a integrable complex structure on $M$ 
is purely topological.

\begin{thm}[\cite{EliashbergStein}]\label{steinstructure}
For $n\geq 3$, any almost complex structure $J$ on a $2n$-dimensional manifold $M$
which admits an exhausting Morse function $f$ with all critical points
of index $\leq n$ is homotopic to an integrable Stein complex structure
$\tilde{J}$ such that $f$ is $\tilde{J}$-plurisubharmonic.
\end{thm}

We may then work with the more relaxed notion of a \emph{Weinstein} manifold. 
A symplectic manifold $M^{2n}$ is \emph{Weinstein} if it admits a Liouville vector field
$Y$ and a Morse function $f$ such that $Y$ is gradient-like with respect to $f$.
It is of \emph{finite type} if $f$ has finitely many critical points, \emph{complete} if the 
Liouville vector field is complete,
and \emph{subcritical} if all critical points are of index strictly less than $n$. For a complete Weinstein
manifold of finite type, we can normalize $f$ so that all critical values are less
than $1$. Then $M$ is a symplectic filling with a cylindrical end
symplectomorphic to $V\times [0,\infty)$, where $V$ is the level set $\{f = 1\}$.
The closed subset $\{f \leq 1 \}$ is called a \emph{Weinstein domain}. We will often
abuse notation and denote the level set $V$ by $\partial M$, and the Weinstein domain by $M$ as well.
From now on all Weinstein manifolds are assumed complete and of finite type.

Similarly, a Weinstein structure $(M, \omega, Y, f)$ is \emph{split} if it is of the form
$$(M'\times \mathbb{C},\  \omega' + dx\wedge dy, \ Y'+\frac{1}{2}(x\del{x} + y\del{y}), \ f' + \kappa(x^2+y^2)),$$ 
where $(M', \omega', Y', f')$ is Weinstein, $(x,y)$ is the Euclidean coordinate on $\mathbb{C}$, and $\kappa$
a positive real constant. 

A Weinstein manifold can be reconstructed by 
symplectic handle attachments in the same way as classical Morse theory.

An index-$k$ handle of real dimension $2n$ is
modeled on the complex $n$--dimensional space $\mathbb{C}^n$ with
the standard symplectic form $\omega_{\st}$ together with a
standard complete Liouville vector field $Y_{\st}$.
Let $(x_i,y_i)$ be the Euclidean coordinates,
$\omega_{\st}=\sum _{i=1}^n dx_i\wedge dy_i$.

Define
\[
Y_{\st} =\sum_{i=1}^{k}\left(2x_i\frac{\partial}{\partial
x_i}- y_i\frac{\partial}{\partial  y_i}\right)+\sum _{j=k+1}^n\frac{1}{2}\left(
x_j\frac{\partial}{\partial x_j}+y_j\frac{\partial}{\partial y_j}\right),
\]
$Y_{\st}$ is the gradient vector field of the function
$$f_{\st}=\sum_{i=1}^{k}(x_i^2- \frac{1}{2}y_i^2)+
\sum_{j=k+1}^{n}\frac{1}{4}(x_j^2+y_j^2)$$ with respect to the
Euclidean metric. It is easy to check that $L_{Y_{\st}}\omega_{\st}=\omega_{\st}$.

Let the
$1$-form $\alpha_{\st}$ be the contraction $\iota_{Y_{\st}}\omega_{\st}$, it restricts
to a contact $1$-form on any hypersurface $V$ transverse to
$Y_{\st}$.
$$\alpha_{\st}=\sum_{i=1}^{k}(2x_idy_i + y_idx_i)+
\sum_{j=k+1}^{n}\frac{1}{2}(x_jdy_j - y_jdx_j).$$

The \emph{core} of an index-$k$ handle is an isotropic $k$-disk
$$D_{\st}=\left\{\sum_{i=1}^{k}y_i^2 \leq 1, x_i=x_j=y_j=0\right\}.$$
Let its boundary sphere be
$$S_{\st}= \left\{\sum_{i=1}^{k}y_i^2 = 1, x_i=x_j=y_j=0\right\}.$$

Let $b_1, b'_1, \dots, b_k, b'_k, a_{k+1}, \dots, a_n$ be positive constants. 
A \emph{standard contact handle} is the surface
$$V_+=\left\{\sum_{i=1}^{k}(b_i x_{i}^2 - b'_i y_{i}^2) + \sum_{j=k+1}^n a_{j}(x_{j}^2+y_{j}^2) = 1\right\}.$$
The standard contact handle $V_+$ is everywhere transverse to the Liouville vector field $Y_{\st}$.
Let $U$ to be the interior of $V_+$,
$$U=\left\{\sum_{i=1}^{k}(b_i x_{i}^2 - b'_i y_{i}^2) + \sum_{j=k+1}^n a_{j}(x_{j}^2+y_{j}^2) \leq 1\right\}.$$
For any set of constants $\{b'_i\}$, we can choose $\{b_i, a_j\}$ sufficiently large such that $U$ is an arbitrarily small tubular
neighbourhood of $D_{\st}$.
The cylinder $V_{-}=\{\sum_{i=1}^{k}y_i^2 = 1\}$ is transverse to the
Liouville vector field $Y_{\st}$, and $S_{\st}$ is an isotropic 
sphere on the contact manifold $V_{-}$. 

Suppose we have a Weinstein domain 
$(M,\omega,Y,\partial M)$. If $S$ is an isotropic 
$(k-1)$-sphere on
$\partial M$ together with a trivialization of the
symplectic subnormal bundle, then using
the following standard neighbourhood theorem,
we can attach a sufficiently small tubular
neighbourhood $U$ of $D_{\st}$ to $M$, identifying
$S_{\st}$ with $S$:

\begin{thm}[\cite{Weinstein}]\label{wei}
For $i=1,2$ let $(M_i,\omega_i,Y_i,V_i,S_i)$ be a symplectic manifold
$(M_i,\omega_i)$ with Liouville vector field $Y_i$, hypersurface
$V_i$ transverse to $Y_i$, and isotropic submanifold $S_i$ of
$V_i$. Given a diffeomorphism from $S_1$ to $S_2$ covered by an
isomorphism between their symplectic subnormal bundles, there exist
neighbourhoods $N_i$ of $S_i$ in $M_i$ and a symplectomorphism $\Phi$
between them extending the given diffeomorphism between $S_i$, such
that $(N_1,\omega_1,Y_1,V_1,S_1)$ is taken to
$(N_2,\omega_2,Y_2,V_2,S_2)$.
\end{thm}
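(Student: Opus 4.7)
The plan is the standard Weinstein neighborhood argument: build a local model from the hypothesized bundle data, identify each $(M_i,\omega_i,Y_i,V_i,S_i)$ with it along $S_i$, and then upgrade to a genuine symplectomorphism via Moser's trick. The key simplification is that the Liouville vector fields let one reduce to a problem on the contact hypersurface. Since $V_i$ is transverse to the complete Liouville field $Y_i$, the flow of $Y_i$ yields a symplectomorphism from a thickening $(V_i\times(-\epsilon,\epsilon),\,d(e^t\alpha_i))$ onto a tubular neighborhood of $V_i$ in $M_i$, under which $Y_i$ becomes $\partial_t$ and $\alpha_i=\iota_{Y_i}\omega_i|_{V_i}$ is the induced contact form. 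Thus it suffices to construct a contactomorphism between neighborhoods of $S_i$ in $(V_i,\alpha_i)$ extending the given diffeomorphism and realizing the given bundle isomorphism; extending by the flow of $\partial_t$ then automatically produces the desired symplectomorphism, carrying $Y_1$ to $Y_2$, $V_1$ to $V_2$, and $S_1$ to $S_2$.

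For the contact part I would use the $\omega_i$-orthogonal splitting along $S_i$,
\[
TV_i|_{S_i} \;=\; TS_i \,\oplus\, N_i^{\mathrm{iso}} \,\oplus\, \mathrm{CSN}(S_i) \,\oplus\, \langle R_i\rangle,
\]
where $R_i$ is the Reeb field and $N_i^{\mathrm{iso}}$, the isotropic normal of $S_i$ inside $\xi_i=\ker\alpha_i$, is canonically identified with $T^*S_i$ via $v\mapsto\omega_i(v,\cdot)|_{TS_i}$. The given base diffeomorphism together with the assumed CSN isomorphism thus induces a bundle map $TV_1|_{S_1}\to TV_2|_{S_2}$ preserving all four summands and the symplectic form. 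Choosing auxiliary metrics and using the exponential map in the $N_i^{\mathrm{iso}}\oplus\mathrm{CSN}(S_i)$ directions together with the Reeb flow in the $\langle R_i\rangle$ direction, I would build diffeomorphisms $\Phi_i$ from a neighborhood of the zero section of an abstract model bundle $T^*S\oplus\mathrm{CSN}(S)\oplus\R$ onto a neighborhood of $S_i$ in $V_i$ realizing this identification along $S_i$. The composite $\Phi_2\circ\Phi_1^{-1}$ is then a diffeomorphism covering the given map such that $\alpha_1$ and $(\Phi_2\circ\Phi_1^{-1})^*\alpha_2$ agree as $1$-forms at each point of $S_1$.

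The final step is Moser's trick applied to the interpolation $\alpha_t=(1-t)\alpha_1+t(\Phi_2\circ\Phi_1^{-1})^*\alpha_2$: the pointwise agreement along $S_1$ keeps the family contact in a small enough neighborhood, and solving the usual Gray equation $\iota_{X_t}d\alpha_t+\dot\alpha_t=\lambda_t\alpha_t$ yields a vector field vanishing on $S_1$ whose time-one flow is the required contactomorphism fixing $S_1$ pointwise. Extending by the Liouville flow then produces the sought symplectomorphism. The main obstacle is preserving all four structures $(\omega,Y,V,S)$ simultaneously, but the initial reduction to the contact slice is exactly what decouples the $(Y,V)$ compatibility from the Moser step: after that reduction, the Liouville and hypersurface structures are carried for free, and one is left only with the standard contact neighborhood argument on $V_i$, for which the subnormal bundle isomorphism provides precisely the pointwise agreement of contact forms needed to run Moser.
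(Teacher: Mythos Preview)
The paper does not prove this statement; it is quoted from Weinstein \cite{Weinstein} and used as a black box for the handle-attachment construction. So there is no ``paper's own proof'' to compare against.

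Your sketch is essentially Weinstein's original argument: reduce via the Liouville flow to a neighborhood problem on the contact hypersurface, use the canonical splitting $TS\oplus T^*S\oplus\mathrm{CSN}(S)\oplus\langle R\rangle$ together with the given subnormal-bundle isomorphism to match the contact forms along $S$, and finish with a Moser/Gray deformation. One point worth tightening: the Gray equation you wrote, $\iota_{X_t}d\alpha_t+\dot\alpha_t=\lambda_t\alpha_t$, only yields a \emph{contactomorphism}, i.e.\ $\psi^*\alpha_2=g\,\alpha_1$, and the naive lift $(x,t)\mapsto(\psi(x),\,t-\log g(x))$ to the symplectization then preserves $\omega$ and $Y$ but sends $\{t=0\}$ to the graph of $-\log g$, not to $V_2$. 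To carry $V_1$ to $V_2$ you need a \emph{strict} contactomorphism ($g\equiv 1$). This is available: since $\dot\alpha_t$ vanishes along $S_1$, one can solve $\mathcal L_{X_t}\alpha_t+\dot\alpha_t=0$ (allowing a Reeb component in $X_t$ determined by an ODE along short Reeb arcs) in a small enough neighborhood, yielding $\psi_t^*\alpha_t=\alpha_0$ exactly. With that adjustment your outline is correct and standard.
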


Apply Theorem \ref{wei} to the pair $(M,\omega,Y,\partial M,S)$
and $(\mathbb{C}^n,\omega_{\st},Y_{\st},V_{-},S_{\st})$ to attach 
$U$ to $M$, using
the natural framing for the trivial symplectic normal bundle of $S_{\st}$,
$$\left\{\del{x_{k+1}},\del{y_{k+1}},\dots, \del{x_n},\del{y_n}\right\}.$$

The new Weinstein domain $M\cup U$ is called a \emph{symplectic handle attachment of index-$k$}.
The Liouville vector field $\widetilde{Y}$ on $M\cup U$ is the original $Y$ on $M$ and $Y_{\st}$ on $U$. 
The boundary of $M\cup U$ is called a \emph{contact handle attachment of index-$k$}. Topologically, 
$\partial (M\cup U)$ is the result of an index-$k$ surgery on $\partial M$. 

Note that after a handle is attach, the same attaching map $\Phi$ can be used 
to attach a thinner handle $U'\subset U$. 

\begin{rmk}\label{extendf}
\emph{
The Morse function $f$ on $M$ can also be extended to $M\cup U$. Define $g$ on $U$ to be the quadratic
$$g = \frac{1}{K}\left(\sum_{i=1}^{k}(b_i x_{i}^2 - b'_i y_{i}^2) + \sum_{j=k+1}^n a_{j}(x_{j}^2+y_{j}^2)\right)+\frac{K-1}{K}$$
The level set $\{g = 1\}$ is exactly $V_+$. For sufficiently large $K$, $\tilde{f}=\min(f,g)$ is an extension of $f$ to $M\cup U$.
}
\end{rmk}

\begin{rmk}\label{rmk1}
\emph{
The contact handle attachment $\partial (M\cup U)$ has a corner where $V_+$ intersects $V_-$. In order
to have a smooth surface we have to round the corner. The rounding procedure can be done in an 
arbitrarily small neighbourhood of the intersection, and will not affect any of our arguments. In terms of the Morse function $\tilde{f}$,
this means we have to smooth $\tilde{f}$ in near where $f=g$. Again this can be done in an arbitrarily small neighbourhood of $V_{-}$
(the set $\{f=g\}$ can be brought into any small neighbourhood of $V_-$ by choosing $K$ large), and will not
affect any argument. Hence from now on we will assume that the Morse function $f$ for a Weinstein domain $M$ is quadratic on each handle.
}
\end{rmk}

Let $M^{2n} = (M'\times \mathbb{C},\  \omega' + dx_n \wedge dy_n, \ Y'+\frac{1}{2}(x_n\del{x_n} + y_n\del{y_n}), \ f' + \kappa(x_n^2+y_n^2))$ 
be a split Weinstein manifold of finite type such that $c_1(M) = 0$. By Remark \ref{rmk1} we can
assume that $(M',f')$ is obtained from handle attachments, and that $f'$ is quadratic on each handle. 
Then clearly by construction, $M$ is made up from handles of the same Morse index, 
and $f$ is also quadratic on each handle. The handle attaching maps of $M$ are precisely those of $M'$ 
together with the identity map on the $\mathbb{C}$-factor. Let $\rho$ be the 
vertical projection $\partial M\rightarrow M'$. Note that the part of $\partial M$ over 
$M'\setminus \partial M'$ is a circle bundle.

\begin{prop}\label{prop1} 
Let Let $M^{2n} = (M'\times \mathbb{C},\  \omega' + dx_n \wedge dy_n, \ Y'+\frac{1}{2}(x_n\del{x_n} + y_n\del{y_n}), \ f' + \kappa(x_n^2+y_n^2))$ 
be a split Weinstein domain of finite type such that $c_1(M) = 0$.
Given any positive integer $N$, there exists $\kappa$ sufficiently large such that any Reeb orbit on $\partial M$ 
of index $\leq N$ is a multiple cover of a circle fibre $\rho^{-1}(p)$ over a critical point $p$ of $f'$ on $M'$. 
Denote this orbit by $\gamma_{p}^{m}$, where $m$ is its multiplicity. The index
of $\gamma_{p}^{m}$ is $2m+2n-4-\indexof(p)$, where $\indexof(p)$ is the Morse index of the critical point $p$.
\end{prop}
\begin{proof}
This is proved in \cite{Yau}. We sketch an argument:
the Reeb flow is a bounded multiple of the Hamiltonian flow, hence they share the same orbits and the Maslov index for the Hamiltonian flow can be 
used to bound the Maslov index of the Reeb flow. The Hamiltonian flow on $\partial M$ consists of a constant rotation in 
the $\mathbb{C}$-component and a Hamiltonian flow on $M'$. As we increase $\kappa$, the rotational speed in the $\mathbb{C}$-component increases while the Hamiltonian flow on $M'$ remains the same. 
If the image of an orbit $\gamma$ is not constant on $M'$, 
then for $\gamma$ the number of complete rotations in the $\mathbb{C}$-component will also increase with $\kappa$, 
adding $2$ to the Maslov index per rotation.
Therefore the the only orbits whose index stay bounded are those that project to a constant on $M'$. We refer to Theorem $3.1$ and Lemma $4.4$ of \cite{Yau} for details and the index computation.
\end{proof}

\begin{rmk}\label{minindex}
\emph{Since $M$ is subcritical, $\indexof(p)\leq n-1$, hence the minimum index of a Reeb orbit is $2+2n -4-(n-1)=n-1\geq 2$. The index calculation also shows that both the cylindrical
and contact homology algebra exist only in the trivial class of the homology grading in Remark \ref{homologygrade}, 
since the orbits $\gamma_{p}^m$ are in fact contractible on $\partial M$}.
\end{rmk}

Rotation in the $\mathbb{C}$-factor, $(p,z)\rightarrow (p,e^{i\theta}z)$, is an $S^{1}$-action on the split Weinstein domain $(M,\partial M)$. We
can choose an $S^1$-invariant compatible complex structure $J$ on the contact distribution $\xi$ of $\partial M$. Denote $\del{\theta}=x_n\del{y_n}-y_n\del{x_n}\in T\mathbb{C}\subset TM$ to 
be the vector field generating the $S^1$-rotation, and $\del{r} = x_n\del{x_n} + y_n\del{y_n}$. 
Let the associated vector field $Z = d\rho (J\del{\theta}|_{\partial M})$ be the projection of $J\del{\theta}$ onto $TM'$.
Suppose $u(s,t)\colon \mathbb{R}\times S^1 \rightarrow \partial M\times \mathbb{R}$ 
is an $S^1$-invariant holomorphic cylinder in the 
symplectization $\partial M\times \mathbb{R}$, then up to $\mathbb{R}$-translation, $u$ is
uniquely determine by the projection $u'(s,t)\colon \mathbb{R}\times S^1 \rightarrow \partial M$.
Since $u'(s,t)$ is still $S^1$-invariant, it projects to a trajectory of $Z$ on $M'$. Conversely, for each multiplicity $m$,
a trajectory of $Z$ on $M'$ lifts to a $m$-fold covered holomorphic cylinder $u$, 
such that $\deldel{u}{t} = m\del{\theta}$ and $\deldel{u}{s} = -J\del{\theta}$.

\begin{prop}\label{regulargradient}
If the vector field $Z$ is of Morse--Smale type, then each $S^1$-invariant holomorphic cylinder 
$$u(s,t)\colon \mathbb{R}\times S^1 \rightarrow \partial M\times \mathbb{R}$$ is regular.
\end{prop}
\begin{proof}
Lemma $7.5$ of \cite{Yau} and Theorem $7.3$ of \cite{Salamon}.
\end{proof}

\begin{rmk}\label{rmk2}
\emph{We will define $J$ such that it takes a standard form near the critical points, so $Z$ is Morse. A generic perturbation of $J$ 
away from the critical points will then make $Z$ Morse--Smale.}
\end{rmk}

Consider the $k$-fold branch cover 
$$\Theta_k\colon \partial M \rightarrow \partial M,\ \  \Theta_k(p, re^{i\theta}) = (p,re^{ik\theta}).$$
The contact form on $\partial M$ takes the form 
$$\alpha = \iota_{Y}\omega = \iota_{Y'}{\omega'}+\frac{1}{2}(x_ndy_n - y_ndx_n).$$
Since $\alpha$ is $S^1$-invariant, we can push forward $\alpha$ to another contact form $\alpha_{k}$
on $\partial M$, $$\alpha_{k} = (\Theta_{k})_*\alpha = \iota_{Y'}{\omega'}+\frac{1}{2k}(x_ndy_n - y_ndx_n).$$
The push forward of an $S^1$-invariant compatible complex structure $J$ on $(\partial M,\alpha)$ will be an
$S^1$-invariant compatible complex structure on $(\partial M,\alpha_k)$,
$$J_k = d\Theta_{k} \circ J \circ d\Theta_{k}^{-1}.$$ 

\begin{rmk}\label{rmk3}
\emph{Recall that $(\partial M, \alpha)$ is identified with the hypersurface $V=\{f'+ \kappa (x_n^2+y_n^2) = 1\}$ in $M$. 
We can identify $(\partial M, \alpha_k)$ with the hypersurface $V_k=\{f'+ k\kappa (x_n^2+y_n^2)=1\}$ as follows:
let $\Psi_k$ be the diffeomorphism $M'\times\mathbb{C} \rightarrow M'\times\mathbb{C}\colon \Psi_k(p,x_n+iy_n) = 
(p,\frac{1}{\sqrt{k}}(x_n+iy_n))$. Then $\Psi_{k}(V)=V_{k}$, and
$\alpha_k = \Psi_{k}^*(\iota_{Y}\omega|_{V_k})$ 
is the pullback of the restriction of $\iota_{Y}\omega$ to $V_k$.
We use $\Psi_k$ to push forward the complex structure $J_k$ on $V$ to $V_k$, and will still denote it by $J_k$. The associated vector 
field $Z_{k}$ for $(V_{k},J_{k})$ is just $\frac{Z}{k}$. In particular, it stays Morse--Smale.
}
\end{rmk}

\begin{prop}\label{prop2}
Given an $S^1$-invariant compatible complex structure $J$ on $(\partial M,\alpha)$ such that the associated
vector field $Z$ is Morse--Smale, there exist $k_0$ such that for all $k > k_0$, and all pairs of Reeb orbits
$(\gamma, \gamma')$ with $\overline{\mu}({\gamma'})+1=\overline{\mu}({\gamma})< N$, 
the elements of $\mathcal{M}(\gamma, \gamma')$ in the symplectization of 
$(\partial M, \alpha_k, J_k)$ are $S^1$-invariant.
\end{prop}
\begin{proof}
Lemma 7.6 of \cite{Yau}.
\end{proof}

Theorem \ref{yau1} follows immediately from Propositions \ref{regulargradient} and \ref{prop2}. 
Moreover, the associated vector field $Z$ is always gradient--like for the function $f'$. Thus 
the differential for $HC(\partial M)$ for each orbit multiplicity coincides with the Morse 
differential of $f'$ on chain level.

\begin{lem}\label{gradlike}
The vector field $Z$ is gradient--like with respect to $f'$.
\end{lem} 
\begin{proof}
Since $\del{\theta}$ is tangent to $\partial M$, in the decomposition
$TM|_{\partial M}=\xi \oplus \mathbb{R}Y \oplus \mathbb{R}R$, we have 
\begin{eqnarray*}
\del{\theta} &=& \del{\theta}- \alpha\left( \del{\theta} \right) \cdot R 
\oplus 0\cdot Y \oplus \alpha \left( \del{\theta} \right) \cdot R\\ \label{eqn1}
&=& \del{\theta}- \frac{1}{2}(x_n^2+y_n^2)\cdot R 
\oplus 0\cdot Y \oplus \frac{1}{2}(x_n^2+y_n^2)\cdot R \\ 
\label{eqn2}
J\del{\theta} &=& J\left(\del{\theta}- \frac{1}{2}(x_n^2+y_n^2)\cdot R\right) \oplus
-\frac{1}{2}(x_n^2+y_n^2)\cdot Y \oplus 0\cdot R
\end{eqnarray*}

The first projection to $\partial M= V \times \{0\}$ ignores the $Y$-coordinate (the $\mathbb{R}$-coordinate of
the symplectization),
\begin{eqnarray*}
Z &=& d\rho \left( J\left(\del{\theta}- \frac{1}{2}(x_{n}^2+y_{n}^2) R\right) \right) 
\end{eqnarray*}

Since $J\left(\del{\theta}- \frac{1}{2}(x_{n}^2+y_{n}^2) R\right)\in \xi$,
\begin{eqnarray*}
\alpha\left(J\left(\del{\theta}- \frac{1}{2}(x_{n}^2+y_{n}^2) R\right)\right)&=&0 \\
df\left(J\left(\del{\theta}- \frac{1}{2}(x_{n}^2+y_{n}^2) R\right)\right)&=&0
\end{eqnarray*}

From these we can find the coefficient of $\del{r}$ and $\del{\theta}$,
$$J\left(\del{\theta}- \frac{1}{2}(x_{n}^2+y_{n}^2) R\right)= Z - 
\frac{2\alpha'(Z)}{x_{n}^2+y_{n}^2} \del{\theta} - 
\frac{df'(Z)}{2\kappa(x_n^2+y_n^2)} \del{r} $$

Since $\omega = \omega' + dx_n\wedge dy_n$ tames $J$, if $\del{\theta}\neq \frac{1}{2}(x_{n}^2+y_{n}^2) R$
(i.e., we are not over a critical point of $f'$), then
\begin{eqnarray*}
0&<& \omega \left(\del{\theta}- \frac{1}{2}(x_{n}^2+y_{n}^2) R, \ \  
Z - \frac{2\alpha'(Z)}{x_{n}^2+y_{n}^2} \del{\theta} - 
\frac{df'(Z)}{2\kappa(x_n^2+y_n^2)} \del{r} \right)\\
&=& \omega \left(\del{\theta}, \ \  
Z - \frac{2\alpha'(Z)}{x_{n}^2+y_{n}^2} \del{\theta} - 
\frac{df'(Z)}{2\kappa(x_n^2+y_n^2)} \del{r} \right)\\
&=& \frac{df'(Z)}{2\kappa}
\end{eqnarray*}

Thus $df'(Z)>0$ for $Z\neq 0$, and $Z$ is indeed gradient--like.
\end{proof}

\begin{rmk}
\emph{
In standard Morse theory the \emph{negative} gradient flow is used to define stable and unstable submanifolds of a critical point. We
keep this convention: the unstable submanifold of $p$, $U_{p}$, consists of positive trajectories of $-Y$ flowing
out of $p$.} 
\end{rmk}

The new ingredient in this paper is to construct a suitable extension of a compatible $S^1$-invariant complex structure 
$J$ on $\partial M$ to the entire symplectic filling. The technical result, which will be proved in Section 5, 
is the following: 

\begin{prop}\label{tech}
Let $M^{2n} = (M'\times \mathbb{C},\  \omega' + dx_n \wedge dy_n, \ Y'+\frac{1}{2}(\del{x_n} + \del{y_n}), \ f' + \kappa(x_n^2+y_n^2))$ 
be a split Weinstein manifold of finite type such that $c_1(M) = 0$, and $\rho\colon M\rightarrow M'$ the projection onto first factor.
Given any positive integer $N$,  
there exists a modification of $f'$, $\kappa$ sufficiently large, and a compatible complex structure 
$J$ on $M$ such that the following hold:
\begin{enumerate}
\item
the complex structure $J$ is $S^1$-invariant. Moreover $J = J|_{TM'} \oplus i$ on 
$M' = M'\times \{0\}$, where $i$ is the standard complex structure on $\C$, so
$M'$ becomes an almost complex submanifold;
\item 
the contact form $\alpha$ and compatible complex structure $J$ on $\partial M$ satisfy
\emph{Proposition \ref{prop1}} and \emph{Proposition \ref{prop2}};
\item
$J$ satisfies \emph{Remark \ref{ramcond}}, there is a neighbourhood $L$ of the union of
the unstable submanifolds $U_{p}$ on which $J(p,z) = J'(p)\oplus i$ is split;
\item
if $p$ and $q$ are critical points of $f'$ and $\indexof(q)< \indexof(p)$, then there is no
holomorphic plane asymptotic to a Reeb orbit $\gamma_{p}^{m}$, 
$\overline{\mu}(\gamma_{p}^{m})\leq N$, which passes through a point on $U_{q}$;
\item
if $\indexof(q) = \indexof(p)$, then there is precisely one holomorphic plane asymptotic to 
$\gamma_{p}^{m}$, such that the marked point is mapped to $U_{q}$ with ramification index $m$, 
namely the $m$-fold branch cover of the vertical plane $\rho^{-1}(p)$ over $p$, $\rho^{-1}(p)(z^m)$
(this forces $p=q$). Moreover this plane is regular.
\end{enumerate}
\end{prop}

Assuming Proposition \ref{tech} we will prove Theorems \ref{contacthomology} and \ref{main}.

\begin{proof}[\emph{\textbf{Proof of Theorem \ref{contacthomology}}}]
It is enough to prove Theorem \ref{contacthomology} up to some degree $N_0$ for a suitable choice of contact form $\alpha_0$.
The general statement then follows by taking a sequence $N_i \rightarrow \infty$, and taking a limit 
of the contact homology algebras for $\alpha_i$, using the natural isomorphisms between homology
algebras of different contacts forms.

Let $\Crit(f')$ be the vector space generated by the critical points of $f'$ with the cohomology grading, 
i.e., $p$ has grading $2n-\indexof(p)$. Let $$\Lambda\left(\bigoplus_{m=1}^{\infty} \Crit(f')[2m-4]\right)$$ 
be the graded commutative tensor algebra generated by the directs sum of shifted copies of $\Crit(f')$. Take the
Morse cohomology differential on each copy $\Crit(f')[2m-4]$ and extend by the Leibniz rule. Denote by
$d_{\Morse}$ the resultant differential on  $\Lambda\left(\bigoplus_{m=1}^{\infty} \Crit(f')[2m-4]\right)$.

The homology of the differential grade algebra 
$\left(\Lambda\left(\bigoplus_{m=1}^{\infty} \Crit(f')[2m-4]\right), d_{\Morse}\right)$ is easily seen to be
$\Lambda\left(\bigoplus_{m=1}^{\infty} H(M,\partial M)[2m-4]\right)$.

Fix some $N_0$. For a suitable contact form $\alpha_0$, we will construct a chain map $\Phi$ between the 
differential graded algebra chain complex for contact homology with respect to $\alpha_0$, and the complex
$\left(\Lambda\left(\bigoplus_{m=1}^{\infty} \Crit(f')[2m-4]\right), d_{\Morse}\right)$. We finish the 
proof by showing that $\Phi$ is a quasi-isomorphism in degrees $< N_0$.

Choose a setup $(M,\partial M, J)$ as in Proposition \ref{tech}. From now on we will only consider the
truncated complexes with grading $\leq N_0$. Denote the critical point $p \in \Crit(f')[2m-4]$ by $p^m$.
Choose a total order on the critical points, such that $p < q$ if $2n-\indexof(p)<2n-\indexof(q)$.

Write down each generator of $\Lambda\left(\bigoplus_{m=1}^{\infty} \Crit(f')[2m-2]\right)$ 
in the form $\prod_{i=1}^{k}p_{i}^{m_i}$, such that 
if $i < j$, then $p_i\leq p_j$, and $m_i<m_j$ if $p_i=p_j$. Let 
$|\prod_{i=1}^{k}p_{i}^{m_i}|=\sum_{i=1}^{k}(2n-\indexof(p_i)+2m_i-4)$
denote the grading of $\prod_{i=1}^{k}p_{i}^{m_i}$.
Define a total order on the generators as follows
$$\prod_{i=1}^{k}p_{i}^{m_i} < \prod_{i=1}^{l}q_{i}^{n_i}$$
\begin{enumerate}
\item if $|\prod_{i=1}^{k}p_{i}^{m_i}| < |\prod_{i=1}^{l}q_{i}^{n_i}|;$
\item if the gradings are equal and $k > l$;
\item if the gradings are equal, $k=l$, and 
$(p_1,p_2,\dots, p_k) < (q_1,q_2,\dots, q_k)$ in the lexicographical order induced by the 
total order on the critical points.
\end{enumerate}

As in Proposition \ref{prop1}, the generators of the chain complex for the contact 
homology algebra are of the form $\prod_{i=1}^{k}\gamma_{p_i}^{m_i}$. There is a 
natural bijection between these and the
generators of $\Lambda\left(\bigoplus_{m=1}^{\infty} \Crit(f')[2m-2]\right)$, 
taking $\gamma_p^m$ to $p^m$. Note that this bijection preserves grading. The bijection and 
the order on the generators of $\Lambda\left(\bigoplus_{m=1}^{\infty} \Crit(f')[2m-2]\right)$ induce
an order on the generators of the contact homology complex.

Let $\M_{\gamma}((p_1,m_1),\cdots, (p_k,m_k))$ be the moduli space of holomorphic planes 
with $k$ marked points, $\{x_i\}_{i=1}^k$, asymptotic to $\gamma$, such that each $x_i$ is 
mapped to the unstable manifold $U_{p_i}$ with ramification index $m_i$ (recall that ramification 
index is defined with respect to the vertical complex direction $\C$). The expected dimension of
$\M_{\gamma}((p_1,m_1),\cdots, (p_k,m_k))$ is $\overline{\mu}(\gamma) - |\prod_{i=1}^{k}p_i^{m_i}|$.

Define the map $\Phi$ by
$$\Phi(\gamma_p^m)= \sum_{\overline{\mu}({\gamma_p^m})= |\prod_{i=1}^{k}p_i^{m_i}|} c\prod_{i=1}^{k}p_i^{m_i},$$
where the coefficient $c$ is the count of rigid holomorphic curves in $\M_{\gamma_p^m}((p_1,m_1),\cdots, (p_k,m_k))$,
and the sum is taken over all $k, p_i, m_i$. Extend $\Phi$ to the entire complex by Leibniz rule.

To see that $\Phi$ is a chain map, consider $\gamma_p^m$ and $\prod_{i=1}^{k}p_i^{m_i}$ with 
$|\prod_{i=1}^{k}p_i^{m_i}|=\overline{\mu}({\gamma_p^m})-1$. For each $i$, perturb $U_{p_i}$ by
taking parallel copies very close by. Since $\dim (U_{p_{i}}) \leq n-1$, they can be assume to 
be pairwise disjoint. Since the marked points are constrained on disjoint sets, the complicated
degeneration phenomenon, where marked points collide and ramification indices change, does not occur. 
Therefore the boundary of the one dimensional moduli space 
$\M_{\gamma_p^m}((p_1,m_1),\cdots, (p_k,m_k))$ consists of two types of curves:
\vspace{0mm}
\begin{enumerate}
\item the domain undergoes SFT degeneration: $u$ is a $2$-story curve $(u_1,u_2)$, such that 
$u_1\in \mathcal{M}_{\gamma_p^m; \gamma_1,\dots, \gamma_l}$ is a 
genus-$0$ holomorphic curve in the symplectization of $\partial M$; 
and $u_2$ consists of $l$ holomorphic planes in the filling $M$,
one asymptotic to each $\gamma_j$. The $k$ marked points with constrain and 
ramification condition are distributed on the $l$ planes of $u_2$.
\item one marked point $x_i$ is mapped to the boundary of $U_{p_{i}}$.
\end{enumerate}

Curves of the first type correspond precisely to terms in $\Phi\circ  \partial$, and 
curves of the second type correspond to terms in $d_{\Morse}\circ  \Phi$. Therefore 
$\Phi\circ  \partial + d_{\Morse}\circ  \Phi = 0$, and $\Phi$ is a chain map.

To see that $\Phi$ is a quasi-isomorphism, consider the matrix for
$\Phi$ with respect to the generators ordered as above. By parts (4) and (5) of Proposition \ref{tech},
this matrix is upper triangular, and the diagonal entries are $1$. 
Hence $\Phi$ is invertible and a quasi-isomorphism.
\end{proof}

\begin{proof}[\emph{\textbf{Proof of Theorem \ref{main}}}]
To compute the cylindrical contact homology group in degree $k$, choose $N_0 >> k$ and $m_0$ sufficiently large so that all 
Reeb orbits of index $< N_0$ are of the form $\gamma_{p}^{m}$ with $m\leq m_0$. 
By Proposition \ref{prop2}, for each orbit multiplicity $m$, with a grading shift, the differential for cylindrical contact 
homology is the same as the differential for the Morse cohomology (because we count \emph{positive} flows of a 
gradient--like vector field $Z$) of $M$. Therefore
$$HC_{k}(\partial M) \cong \bigoplus_{m=1}^{\infty} H^{2m+2n-4-k}(M) \cong \bigoplus_{m=0}^{\infty} H_{k+2-2m}(M,\partial M).$$
Define $\iota$ to be this isomorphism. Note that $\iota_m$ detects orbits of multiplicity $(m+1)$.
Furthermore, by Lemma \ref{gradlike}, both vector fields $Z$ and $Y$ are gradient-like with respect to $f$, so they have the same Morse
differential. Therefore the isomorphism $\iota$ is in fact a chain level identification of the generators and differential
of $HC(\partial M)$ with those of the Morse theory of the Liouville vector field $Y$.

Let $\{p_1, p_2,\dots, p_N\}$ be the set of critical points, and
$a = \sum_{i=1}^{N}c_{i}\gamma_{p_{i}}^{m_i}$ an element of $HC_{k}$. The multiplicity
$m_i$ is determined by $2m_i+2n-4 -\indexof(p_{i}) =k$. 
The Poincar\'{e} dual of a compactly supported closed ($k+2-2l$)-form $\theta$ 
can be represented by a pseudo--cycle, $\sum_{j=1}^{N} b_{j}U_{p_{j}}$,
of unstable submanifolds of critical points of index $2n+2l-2-k$, i.e., $b_j=0$ unless
$\indexof(p_j)=2n+2l-2-k$. The descendant 
$\sum_{i=1}^{N} c_{i}\int_{\mathcal{M}_{\gamma_{p_i}^{m_i}}}\ev^*(\theta)\wedge\psi^l$ is the weighted 
algebraic count of the number of holomorphic planes asymptotic to any one of 
$\gamma_{p_{i}}^{m_i}$, and passing through the pseudo--cycle 
$\sum_{i=1}^{N} b_{j}U_{p_{j}}$ with ramification index $(l+1)$. 

If $m_i>l+1$, then the Morse
index of $p_{i}$ is $2m_i+2n-4-k > 2l+2n-2-k$. Therefore by part (4) of Proposition \ref{tech}, 
there is no holomorphic plane asymptotic to $\gamma_{p_{i}}^{m_i}$ and passing through 
$\sum_{i=1}^{N} b_{j}U_{p_{j}}$ (regardless of ramification index). 

If $m_i<l+1$, then by part $1$ of Proposition \ref{tech}, $M'=M'\times \{0\}$ is a real codimension
$2$ complex submanifold. Since the asymptotic orbit $\gamma_{p_i}^{m_i}$ winds $m_i$ times around
$M'$, a holomorphic plane asymptotic to $\gamma_{p_i}^{m_i}$ has intersection number $m_i$ with $M'$.
However a ramification point of index $(l+1)$ already contributes more than $m_i$ to the intersection
number. By positivity of intersections, there is no holomorphic plane asymptotic 
to $\gamma_{p_{i}}^{m_i}$ and passing through $\sum_{i=1}^{N} b_{j}U_{p_{j}}$ with ramification index $(l+1)$. 

If $m_i=l+1$, then by part (5) of Proposition \ref{tech}, 
there is precisely one such regular holomorphic plane whenever $i=j$. Therefore 
$$\sum_{i=1}^{N} c_{i}\int_{\mathcal{M}_{\gamma_{p_i}^{m_i}}}\ev^*(\theta)\wedge\psi^l = \frac{1}{l!}\sum_{i=1}^{N} c_{i}b_i.$$

The natural pairing between homology and cohomology can been seen from 
its Morse theory by representing an element of $H_{2n-k}(M,\partial M) \cong H^{k}(M)$ as a
linear combination of the \emph{stable} submanifolds $S_{p_{i}}$.
The intersection pairing between $H_{2n-k}(M,\partial M)$ and $H_{k}(M)$ is then generated by 
$S_{p_{i}}\cap U_{p_{j}} = \delta_{ij}$. 

Therefore 
$\sum c_{i}b_j \delta_{ij}$ is the intersection number of $\iota_l(a)$, which we realize as 
a linear combination of the stable submanifolds of the critical points, with the Poincar\'{e} 
dual of $\theta$. In other words,
\[\sum_{i=1}^{k}c_{i}\int_{\mathcal{M}_{\gamma_i}}\ev^*(\theta)\wedge \psi^l = \frac{1}{l!}<\iota_l(a),[\theta]>. \qedhere\]
\end{proof}

\section{Subcritical Polarizations}

A \emph{polarized K\"{a}hler manifold}
$(M,\omega,J,\Sigma)$ is a K\"{a}hler manifold 
$(M,\omega,J)$ with an primitive integral K\"{a}hler form $\omega$ and a smooth reduced
complex hypersurface $\Sigma$ representing the Poincar\'{e} dual
of $k[\omega]$. The number $k$ is called the \emph{degree}
of the polarization.

There is a canonical plurisubharmonic function 
$f_{\Sigma}\colon M\setminus \Sigma \rightarrow \mathbb{R}$ associated to 
a polarization. Let $\mathcal{L}$ be the holomorphic
line bundle associated to the divisor $\Sigma$ and $s$ the
holomorphic section (unique up to scalar multiplication) of 
$\mathcal{L}$ whose zero section is $\Sigma$. Choose a hermitian
metric $\| \cdot \|$ such that the compatible metric connection
$\nabla$ has curvature $R = 2\pi i k\omega$. Then define
$$f_{\Sigma}=-\frac{1}{4\pi k}\log\|s(x)\|^{2}.$$

It is not hard to check that $f_{\Sigma}$ is plurisubharmonic
and in fact $-dd^{J}f_{\Sigma} = \omega$. All the critical
points of $f_{\Sigma}$ lie within a compact subset of 
$M \setminus \Sigma$. 

A polarization is \emph{subcritical} if there is a 
plurisubharmonic Morse function $f$ such that
$(M \setminus \Sigma,J,f)$ is a subcritical 
Stein manifold, and $f=f_{\Sigma}$ outside a compact
subset of $M\setminus \Sigma$. 

Split $M$ along a level set $V=\|s(x)\|=\epsilon$ into a composition 
of two symplectic fillings by the standard stretching the
neck construction (described in detail in Section 1.3 of \cite{EliashbergGiventalHofer}). 
One filling is $M\setminus \Sigma$, and the other is the complex normal bundle
of $\Sigma$ in $M$. 

\begin{lem} \label{h1}
If $(M,\Sigma)$ is a subcritical polarization such that
$c_1(M\setminus \Sigma)=0$. Let $D$ be the cohomological 
dimension of $M\setminus \Sigma$, i.e. the highest non-vanishing 
cohomology degree. Then 
\begin{enumerate}
\item
under the $H_1$--grading, the contact homology 
algebra $HC^{\cont}(V)$ vanishes except for the trivial class. 
\item
under the $\mathbb{Z}$--grading, the lowest non-vanishing group of $HC^{\cont}(V)$ is 
$$HC^{\cont}_{2n-2-D}(V) \cong H^{D}(M\setminus \Sigma).$$
\end{enumerate}
\end{lem}
\begin{proof}
By Proposition \ref{tech}, there exist a choice of $V' =\partial (M\setminus \Sigma)$ such
that all non-contractible Reeb orbits have arbitrarily large index. Hence $HC^{\cont}$ is non-trivial
only for the trivial class in $H_1$. As pointed out in Remark \ref{minindex}, each Reeb orbit has
index at least $n-1$, thus a product of more than one orbit has index at least $2n-2$. Hence the chain 
groups of index less than $2n-2$ are linearly generated by orbits $\gamma^{m}_{p}$. Since there
are no products, the contact homology algebra differential coincides with the cylindrical contact homology
differential. By Proposition \ref{prop2}, this coincides with copies of a Morse differential on
$M\setminus \Sigma$. In particular, the first non-vanishing group is 
$HC^{\cont}_{2n-2-D}(V) \cong H^{D}(M\setminus \Sigma)$.
\end{proof}

\begin{prop}\label{degree}
If $(M^{2n},\omega,J,\Sigma,k)$ is a subcritical polarization such that 
$c_{1}(M\setminus \Sigma) = 0$, then the degree of polarization is $1$.
\end{prop}
\begin{proof}
The proof is already sketched in \cite{EliashbergGiventalHofer}. We fill in some details.
Consider the normal bundle $\mathcal{L}$ over $\Sigma$ and let $V\subset \mathcal{L}$ 
be the unit circle bundle. To keep orientation of $V$ consistent with that of the subcritical complement, 
$\mathcal{L}$ is regarded as a symplectic filling with a \emph{negative} end symplectomorphic to $V\times (-\infty,0]$. 
The Reeb flow on $V$ is fibrewise rotation, and each fibre of $V$ is a Reeb orbit.
We are now in the Morse--Bott setup. We refer to \cite{Bourgeois} and \cite{compactness} for precise
definitions, index computations, and compactness results. 

Choose an auxiliary Morse function $F$ on $\Sigma$. The chain complex for 
$HC^{\cont}(V)$ is generated by Reeb orbits (including their multiple
covers) which lie above the critical points of $F$. The differential counts the number of \emph{generalized}
holomorphic curves of expected dimension $1$. A generalized holomorphic curve consists of multiple stories,
each story is either a holomorphic curve in $V\times \mathbb{R}$ asymptotic to Reeb orbits over points
on $\Sigma$ (not necessarily the critical points of $F$), or fragments of gradient trajectories of $F$ 
connecting points on $\Sigma$. Furthermore the set of Reeb orbits on the positive end and negative end of 
consecutive stories match.

The symplectization of $V$ can be identified with the complement of the zero-section of $\mathcal{L}$.
Assume at first that there is a regular complex structure $J$ on $\Sigma$ for its Gromov-Witten theory 
(for example, if $\Sigma$ is semi-positive). Choose a $\mathbb{C^{*}}$-invariant complex structure 
$\widetilde{J}$ on $\mathcal{L}$ such that the projection 
$\pi\colon (\mathcal{L},\widetilde{J}) \rightarrow (\Sigma,J)$ is holomorphic.
A $\widetilde{J}$-holomorphic curve $u$ in $V\times \mathbb{R}$ projects to a $J$-holomorphic 
sphere $\pi\circ u\colon \mathbb{C}P^1\rightarrow \Sigma$, and can be viewed
as a meromorphic section of the induced complex line bundle 
$(\pi\circ u)^*\mathcal{L}$ over $\mathbb{C}P^1$. The location and multiplicities of the poles
and zeros are determined by the multiplicities of the asymptotic Reeb orbits at positive and negative
infinity.

The linearized $\overline{\partial}$ operator $D_{u}$ splits as the sum of $D_{(\pi\circ u)}$,
the linearized $\overline{\partial}$ operator for the Gromov-Witten theory on $(\Sigma, J)$, and $D_{\mathcal{L}}$,
the linearized $\overline{\partial}$ operator for meromorphic sections of the 
induced line bundle $(\pi\circ u)^*\mathcal{L}$. By regularity assumption of $J$, 
$D_{\pi\circ u}$ is surjective. By Riemann--Roch, $D_{\mathcal{L}}$ is always surjective. Therefore any 
$\widetilde{J}$-holomorphic curve $u$ is regular. Moreover, since meromorphic sections come in 
$\mathbb{C}^{*}$-families, the index of $u$ is at least $2$. Hence the index-$1$ generalized holomorphic curves
consist entirely of gradient trajectories of the auxiliary Morse function $F$. Therefore 
$HC^{\cont}(V)\cong \bigoplus_{i=1}^{\infty}H(\Sigma)$, with a copy of $H(M)$ for each orbit multiplicity.
In particular, the simple orbit $\gamma$ over the minimum of $F$ is a cycle. By Lemma \ref{h1}, $\gamma$
is in the trivial class of $H_1(V)$, and therefore has a capping surface on $V$. 
Together with the disk which $\gamma$ bounds in the fibre of $\mathcal{L}$, we have a closed surface in $M$ 
which intersects $\Sigma$ once. Therefore $\Sigma$ cannot represent a multiple of an 
integral class in $H^{2}(M)$, so the degree of the polarization must be $k=1$.

For the general case we can apply either the domain dependent perturbation of \cite{cieliebakmohnke}, or
the polyfold theory of \cite{GWpolyfold}, to achieve transversality for the Gromov-Witten theory on $\Sigma$, and
hence surjectivity of $D_{\pi\circ u}$. The argument then follows in identical fashion.
\end{proof}

\begin{prop}\label{monotone}
If $(M^{2n},\omega,J,\Sigma,k)$ is a subcritical polarization such that 
$c_{1}(M\setminus \Sigma) = 0$, then $\Sigma$ is monotone.
\end{prop}
\begin{proof}
By Lefschetz theorem and Thom isomorphism, $c_1(TM)$ vanishing on $M\setminus \Sigma$ 
implies $c_1(TM)$ is Poincar\'{e} dual to a multiple of the divisor $\Sigma$. On $\Sigma$,
$c_1(TM) = c_1(T\Sigma) + c_1(\mathcal{L})$.
Being the Euler class, $c_1(\mathcal{L})$ is Poincar\'{e} dual to $\Sigma$. Hence
$c_1(T\Sigma)$ is also a multiple of $[\Sigma]$. We need to check that it is in fact 
a positive multiple.

By Proposition \ref{degree}, the degree of a subcritical polarization is $k=1$, and
\begin{equation}\label{eq:grading}
HC^{\cont}(V)\cong \bigoplus_{i=1}^{\infty}H(\Sigma).
\end{equation}

We compute the grading shift of each copy of $H_{*}(\Sigma)$. Let $\gamma$ be a simple fibre
on $V$. Let $A = A_1\cup A_2$ be the union of the disk $A_1$ which $\gamma$ bounds on the fibre of $\mathcal{L}$, and a capping 
surface $A_2$ of $\gamma$ on $V$. On $\gamma$, the contact distribution $\xi$ is $T\Sigma$. If we trivialize $\xi$ over $A_1$ and
take the split trivialization, then
the linearized Reeb flow is identity, so the Maslov index is $0$. The Maslov index of $\gamma$ 
with respect to a trivialization over $A_2$ is therefore $2c_1(T\Sigma, A)$. By \cite{Bourgeois}, the Reeb orbit $\gamma$
over an index-$i$ critical point of $F$ has Maslov index $2c_1(T\Sigma, A) - \frac{1}{2}\dim(\Sigma) + i$. Therefore 
the grading of $\gamma$ in contact homology is 
$$(n-3)+2c_1(T\Sigma, A) - (n-1) + i = 2c_1(T\Sigma, A) + i -2.$$
Similarly, the grading for a multiplicity-$l$ orbit is 
\begin{equation}\label{eq:grading2}
2lc_1(T\Sigma, A) + i -2.
\end{equation}

By \eqref{eq:grading} and \eqref{eq:grading2}, if $c_1(T\Sigma, A)$ is negative, then $HC^{\cont}(V)$ is non-trivial in
arbitrarily low degrees, contradicting the second part of Lemma \ref{h1}. If  $c_1(T\Sigma, A) = 0$, then the lowest
graded non-vanishing group of $HC^{\cont}(V)$ is infinitely generated, again contradicting Lemma \ref{h1}. Therefore
$c_1(T\Sigma, A)$ is positive, and the lowest graded non-vanishing group is
\begin{equation}\label{eq:compare}
H_{0}(\Sigma)\cong HC^{\cont}_{2c_1(T\Sigma, A)-2} \cong HC^{\cont}_{2n-2-D}\cong H^{D}(M\setminus \Sigma). 
\end{equation}

Note that $A$ intersects $\Sigma$ positively (by the choice of orientation on $A_1$). Therefore $<[\Sigma],A>$
is also positive, and $c_1(T\Sigma)$ is a positive multiple of $\omega=[\Sigma]$.
\end{proof}

\begin{proof}[\emph{\textbf{Proof of Theorem \ref{unirule}}}]
Let $V'\subset M\setminus \Sigma$ be a suitable choice of hypersurface satisfying Proposition \ref{tech}.
Split $M$ along $V$ and $V'$ into a composition of three symplectic cobordisms: a negative filling $\mathcal{L}$
with Morse--Bott boundary asymptotics $V\times (-\infty,0]$, a cobordism 
$\overrightarrow{VV'}$ between $V$ and $V'$, and a positive subcritical Stein filling $M\setminus \Sigma$ 
with boundary asymptotics $V'\times [0,\infty)$ satisfying Proposition \ref{tech} and Theorem \ref{main}. 

In $\mathcal{L}$, consider the moduli space $\mathcal{M}$ of generalized holomorphic planes 
with the negative end asymptotic to the simple Reeb orbit over the minimum of $F$, and in the
homology class of a fibre. The expected dimension of 
$\mathcal{M}$ is $(n-3)-(2c_1(T\Sigma, A) - (n-1))+ 2c_1(TM, A)= 2n-2.$
Since $\Sigma$ is monotone, there exists a regular 
complex structure for its Gromov--Witten theory. Using the same argument as in the proof of 
Proposition \ref{degree}, we see that $\mathcal{M}$ is diffeomorphic to $\Sigma$: 
each fibre $S$ of $\mathcal{L}$, together with the Morse
trajectory from $\pi(S)$ to the minimum of $F$, is a regular generalized holomorphic plane. In particular,
if we add a marked point and require it to pass through a fixed point, then there is a unique rigid
holomorphic plane in $\mathcal{M}$ satisfying this condition. In terms of correlators, this
means
$$\int_{\mathcal{M}_{\gamma}}\ev^{*}(\theta_1) = 1,$$
where $\theta_1$ is Poincar\'{e} dual to a point.

In the cobordism $\overrightarrow{VV'}$ between $V$ and $V'$, the isomorphism between 
$HC^{\cont}_{2c_1(T\Sigma, A)-2}$ and $HC^{\cont}_{2n-2-D}$ is realized on chain level by
a number of rigid cylinders between $\gamma\in V$ and $\gamma'_i\in V'$, such that
$[\sum c_i\gamma_i]$ is a generator of $HC^{\cont}_{2n-2-D} \cong H^{D}(M\setminus \Sigma)$
of on homology (the exact representative $\sum c_i\gamma_i$
will depend on the choice of perturbation).

In the positive filling $M\setminus \Sigma$, let $\theta_2$ 
be Poincar\'{e} dual to a generator of $H_{D}(M\setminus \Sigma)$.
By Theorem \ref{main}, 
$$\sum c_{i}\int_{\mathcal{M}_{\gamma_i}}\ev^*(\theta_2) = 1.$$
In other words, algebraically there is one rigid holomorphic plane asymptotic to one of the 
$\gamma_i$'s, and passing through a cycle representing the generator of $H_{D}(M\setminus \Sigma)$.

These holomorphic curves glue together to give 
(algebraically) one holomorphic $S^2$ in $M$ with $2$ marked points, one marked point 
passes through a fixed point, and the other passes through the generator of $H_{D}(M\setminus \Sigma)$.
Conversely, if $S^2$ is a holomorphic sphere in $M$ in a homology class $A$ whose intersection number
with $\Sigma$ is $1$, with the above marked point requirements, then as we split $M$ along $V$ and $V'$,
$S^2$ must degenerate into the moduli spaces described above. Therefore $M$ has a non-vanishing
Gromov--Witten invariant
$$\sum_{\omega (A)=1}\int_{\mathcal{M}_{g=0,m=2,A}}\ev_{1}^{*}(\theta_{1})\ev_{2}^{*}(\theta_{2})=1.$$
In particular $M$ is uniruled.
\end{proof}

\section{Proof of Proposition \ref{tech}}

Let $M^{2n}$ be a split Weinstein manifold of the form 
$$(M^{2n},\omega,Y,f)=(M'\times \mathbb{C},\  \omega' + dx_n\wedge dy_n, \ Y'+\frac{1}{2}(\del{x_n} + \del{y_n}), \ f' + \kappa(x_n^2+y_n^2))$$ 
where $(x_n,y_n)$ is the Euclidean coordinate on the $\mathbb{C}$ factor. 
We may assume that 
handles are attached in increasing order of Morse indices, the attaching sphere of a $k$-handle only passes
through handles of indices strictly less than $k$, and there is a single $0$-handle.

First we construct a compatible complex structure $J$ for an isolated index-$k$ symplectic handle $U$. Fix a positive integer $m_0$.
Let $$V=\partial U=\left\{\sum_{i=1}^{k}(b_i x_{i}^2 - b'_i y_{i}^2) + \sum_{j=k+1}^{n-1} a_{j}(x_{j}^2+y_{j}^2) + \kappa(x_{n}^2+y_{n}^2)= 1\right\}$$
be the corresponding index-$k$ contact handle with $\frac{\kappa}{m_0} >> b_i, a_j$.  
As in Proposition \ref{prop1}, there is a distinguished simple Reeb orbit $\gamma$ on $V$ such that 
all low index orbits on $V$ are multiple covers of $\gamma$,
$$\gamma = \{x_{n}^2+y_{n}^2=\frac{1}{\kappa},\  x_{1}=y_{1}=\dots =x_{n-1}=y_{n-1}=0\}.$$

Let $B_{\kappa}$ be the 
neighbourhood of the core isotropic disk $D_{\st}$ of the $k$-handle:
$$B_{\kappa} = \left\{\sum_{i=1}^{n-1} \frac{\kappa}{m_0}x_{i}^2 + \sum_{j=k+1}^{n-1} \frac{\kappa}{m_0} y_{j}^2 
+ \kappa(x_{n}^2+y_{n}^2) \leq 1\right\}.$$ 
$B_{\kappa}$ lies in $U$, and is tangent to $V$ along $\gamma$.

Let $G^{m}_{\kappa}$ be the ellipsoid 
$$G^{m}_{\kappa}=\left\{\sum_{i=1}^{n-1}\frac{\kappa}{m}(x_{i}^2+y_{i}^2) + \kappa(x_{n}^2+y_{n}^2) \leq 1\right\}$$
Essentially $B_{\kappa}$ is the union of $G^{m_0}_{\kappa}$-ellipsoids around each point of the 
core isotropic disk $D_{\st}$. Note that $G^{m}_{\kappa}\subset G^{m_0}_{\kappa}$ if $m<m_0$.

On $\gamma$, the Liouville vector field, Reeb vector field, and contact distribution are:  
$$Y = x_n\del{x_n} + y_n\del{y_n},\ \  R = 2\kappa(x_n\del{y_n}-y_n\del{x_n}),\ \  \xi = T\mathbb{C}^{n-1}.$$ 

In a neighbourhood $\{\sum_{i=1}^{n-1}x_i^2+y_i^2 < \epsilon\}$ of $\gamma$ on $V$, the
contact distribution is a graph over $T\mathbb{C}^{n-1}$.
The projection $\mathbb{C}^{n} = \mathbb{C}^{n-1}\times \mathbb{C}\rightarrow \mathbb{C}^{n-1}$ induces an isomorphism between 
$\xi$ and $T\mathbb{C}^{n-1}$.
Choose $J$ to be an almost complex structure such that
\begin{enumerate}
\item inside $B_{\kappa}$, $J$ is the standard complex structure on $\mathbb{C}^{n}$;
\item on $V$, $J\xi = \xi$, $JY = \frac{1}{2\kappa}R$, and $JR = -2\kappa Y$;
\item in a sufficiently small neighbourhood $\{\sum_{i=1}^{n-1}x_i^2+y_i^2 < \epsilon'\}$ of $\gamma$ on $V$, 
$J|_{\xi}$ is the lift of the standard complex structure on $\mathbb{C}^{n-1}$;
\item between $B_{\kappa}$ and $V$, $J$ smoothly interpolates between the standard complex structure and
the almost complex structure on $V$ through $\omega$-tame almost complex structures;
\item $J$ is invariant under rotation in the $\{x_n,y_n\}$-coordinate plane.
\end{enumerate}

\begin{rmk}\label{localJ}
\emph{
Along $\gamma$, the two ways to define $J$ agree, so $J$ is well defined. 
Furthermore, $J$ tames $\omega$ on $\gamma$, so
$J$ tames $\omega$ in a small neighbourhood $\{\sum_{i=1}^{n-1}x_i^2+y_i^2 < \epsilon'\}$ of $\gamma$. 
$J$ is not quite a compatible complex structure, since $Y$ is paired with a multiple of $R$. 
To turn $J$ into a compatible complex structure, choose a smooth function $h(t)$ such that $h(0)=\frac{1}{2\kappa}$ and $h(1)=1$.
In the cylindrical end $V\times [0,\infty)$, for $t\in [0,1]$, let $J|_{V \times \{t\}}$ be $\mathbb{R}$-invariant on the contact
distribution $\xi$, $J|_{V \times \{t\}}Y = h(t)R$, and $J|_{V \times \{t\}}R = -\frac{1}{h(t)}Y$. For $t>1$, let $J$ be 
$\mathbb{R}$-invariant. This gives a compatible complex structure $J$ inside the symplectic handle. Moreover it is 
not hard to check that the
associated vector field $Z$ for $J|_{V}$ vanishes exactly at the origin and is Morse of index-$k$, as required in Remark \ref{rmk2}.
}
\end{rmk}

\begin{defn}
\emph{
A \emph{model complex structure $J$} is an $S^1$-invariant compatible complex structure constructed above.  
}
\end{defn}

\begin{lem}\label{regularity}
For a model complex structure $J$, there exists a regular central vertical holomorphic plane
$$u(s,t)=(0,0,\dots, 0,x_n(s,t),y_n(s,t)).$$ 
Furthermore the $m$-fold branch covers $u_m(z) = u(z^m)$ are also regular.
\end{lem}
\begin{proof} By the definition of $J$, $J$ is split on the central vertical plane $\{(0,0,\dots, 0,x_n,y_n)\}$. 
It is the standard complex structure on the first $(n-1)$ copies of $\mathbb{C}$, and by uniformization there is a
holomorphic map $u(s,t)=(0,0,\dots, 0,x_n(s,t),y_n(s,t))$.
The linearized $\overline{\partial}$ operator at $u$, $D_{u}$, can be directly
computed in local coordinates: $$D_{u}v = \eta ds - J(u)\eta dt$$
where $v = (\mu_{1},\nu_1,\dots, \mu_{n},\nu_{n})\in u^*(TM)$ and
$$\eta = \frac{1}{2}(\del{s}v+J(u)\del{t}v+(\del{v}J)(u)\del{t}(u))$$
We need to check the surjectivity of the differential operator 
$\eta\colon \mathbb{R}^{2n}\rightarrow \mathbb{R}^{2n}$.

Fix a point $p=(0,0,,\dots, 0,P,Q)$. First assume $p$ lies
on $V = V\times \{0\}$, take a a path on $V$
$$\alpha(\tau)=(\tau,0,\dots,0,x_{n}(\tau),y_{n}(\tau)),$$
where $$x_{n}(\tau)=P\sqrt{1-b_1\tau^2},\ \  y_{n}(\tau)=Q\sqrt{1-b_1\tau^2}.$$

The Liouville and Reeb fields are given by:
$$Y(\tau)= \left\{
\begin{array}
[c]{ll}
2\tau\del{x_1}+\frac{1}{2}x_{n}(\tau)\del{x_n}+\frac{1}{2}y_{n}(\tau)\del{y_n}, & \text{if Morse index is zero} \\
\frac{1}{2}\tau\del{x_1}+\frac{1}{2}x_{n}(\tau)\del{x_n}+\frac{1}{2}y_{n}(\tau)\del{y_n}, & \text{if Morse index is greater than zero}\\
\end{array} \right.
$$

$$R(\tau)= \left\{
\begin{array}
[c]{ll}
\frac{1}{3b_1\tau^2+1}(2b_{1}\tau\del{y_{1}}- 2\kappa y_n(\tau)\del{x_n}+
2\kappa x_{n}(\tau)\del{y_n}),& \text{if Morse index is zero} \\
2a_{1}\tau\del{y_1}- 2\kappa y_n(\tau)\del{x_n}+
2\kappa x_{n}(\tau)\del{y_n}, & \text{if Morse index is greater than zero}\\
\end{array} \right.
$$

In any case, $Y$ and $R$ only involve $\{\partial x_{1}, \partial y_1, \partial x_n, \partial y_n\}$. Furthermore,
the contact distribution contains $\{\del{x_{2}},\del{y_2},\dots,\del{x_{n-1}},\del{y_{n-1}}\}$. 
By its definition, $J$ is the standard complex structure on 
$\{\del{x_{2}},\del{y_2},\dots,\del{x_{n-1}},\del{y_{n-1}}\}$.

Hence $\del{(1,0,\dots,0)}J$ is non-zero only for the submatrix spanned by 
$\{\partial x_{1}, \partial y_1, \partial x_n, \partial y_n\}$:
$$\del{(1,0,\dots,0)}J =
\left[
\begin{array}{ccccccc}
* & * & 0  & \dots & 0 & * & *\\
* & * & 0  & \dots & 0 & * & *\\
0 & 0 & 0  & \dots & 0 & 0 & 0\\
\vdots& &  & \ddots & & & \vdots \\
0 & 0 & 0  & \dots & 0 & 0 & 0\\
* & * & 0  & \dots & 0 & * & *\\
* & * & 0  & \dots & 0 & * & *
\end{array}
\right]
$$

where possible non-zero terms are marked by $*$.
 
Also $\del{t}(u) = (0,\dots, 0, \del{t}x_{n},\del{t}y_{n})$, hence

$$(\del{(1,0,\dots,0)}J)\del{t}(u)|_{(0,0,\dots, P,Q)}=
\left[ \begin{array}{c} 
A_1 \\ A_{2}\\ 0 \\ \vdots \\ 0 \\ C_1 \\ C'_1\end{array} \right]$$

Derivatives in the other directions are computed similarly.

If $p$ lies on other levels $V\times \{x\}$, the computation is identical. 
If $p$ lies inside $B_{\kappa}$, then $D_{u}$ is the standard 
Cauchy--Riemann operator. Therefore $D_{u}$ overall takes the form

$$
D_{u}\left[
\begin{array}{c}
\mu_1 \\ \nu_1 \\  \vdots \\ \mu_{n-1} \\ \nu_{n-1} \\ \mu_n \\ \nu_n
\end{array}\right]
= \left[
\begin{array}{c}
\del{s} \mu_1 - \del{t} \nu_1 + A_1(s,t)\mu_1+B_1(s,t)\nu_1\\
\del{s} \nu_1 + \del{t} \mu_1 + A_2(s,t)\mu_1+B_2(s,t)\nu_1\\
\vdots \\
\del{s} \mu_{n-1} - \del{t} \nu_{n-1} + A_{2n-3}(s,t)\mu_{n-1}+B_{2n-3}(s,t)\nu_{n-1}\\
\del{s} \nu_{n-1} + \del{t} \mu_{n-1} + A_{2n-2}(s,t)\mu_{n-1}+B_{2n-2}(s,t)\nu_{n-1}\\
\del{s} \mu_{n} - \del{t} \nu_{n} + A_{2n-1}(s,t)\mu_{n}+B_{2n-1}(s,t)\nu_{n}+F_1(\mu_1, \nu_1,\dots, \mu_{n-1},\nu_{n-1},s,t)\\
\del{s} \nu_{n} + \del{t} \mu_{n} + A_{2n}(s,t)\mu_{n}+B_{2n}(s,t)\nu_{n}+F_2(\mu_1, \nu_1,\dots, \mu_{n-1},\nu_{n-1},s,t)\\
\end{array}\right]
$$

The operator $D_{u}$ is upper triangular, in fact except the for last row, $D_u$ splits into a direct sum of real Cauchy--Riemann operators
on $\mathbb{C}$. Therefore $D_{u}$ is surjective. The computation for the multiple cover $D_{u_m}$ is identical.
\end{proof}

We need a generalization of the Monotonicity Lemma for the standard complex structure in $\mathbb{C}^n$. We 
thank S. Lisi for the following simple proof.

\begin{lem}\label{mono}
Let $G\subset \mathbb{C}^{n}$ be the ellipsoid $\left\{\sum_{j=1}^{n-1}|z_{j}|^2 + m|z_{n}|^2 \leq r^2\right\}$, $D$
the unit disk in $\mathbb{C}$ and $\omega$ the standard symplectic form. 
Suppose $u=(u_1,\dots, u_n)\colon D\rightarrow G$ is a holomorphic disk such that 
$u(\partial D)\subset \partial G$, $u(0) = 0$, and $\frac{\partial^{l} u_n}{\partial z_n^l}(0) =0$ for $l=1,\dots m-1$.
Then $$\int_{D}u^{*}\omega \geq \pi r^2.$$
\end{lem}
\begin{proof}
Without loss of generality we may assume $r=1$. Expand each component of $u$ into power series 
$u_j=\sum_{k=1}^{\infty}A_{kj}z^k$. Then 
\begin{eqnarray*}
\int_{D}u^{*}\omega &=& \frac{i}{2}\sum_{j=1}^{n}\int_{D}u_{j}^{*}(dz_j\wedge d\bar{z}_j)\\
&=& \frac{i}{2}\sum_{j=1}^{n}\int_{\partial D}u_{j}^{*}(z_jd\bar{z}_j)\\
&=& \frac{i}{2}\sum_{j=1}^{n}\int_{\partial D}\left(\sum_{k=1}^{\infty}A_{kj}z^k\right)
\left(\sum_{k=1}^{\infty}k\overline{A}_{kj}\bar{z}^{k-1}d\bar{z}\right) 
\end{eqnarray*}
by the residue theorem, the only non-vanishing integral is 
\begin{equation*}
\int_{\partial D}z d\bar{z} = -2\pi i
\end{equation*}
Therefore
\begin{equation*}
\int_{D}u^{*}\omega = \pi \sum_{j=1}^{n}\sum_{k=1}^{\infty}k|A_{kj}|^2
\end{equation*}

On $\partial D$,
\begin{eqnarray*}
1 &=& \sum_{j=1}^{n-1}|z_{j}|^2 + m|z_{n}|^2\\
&=& \sum_{j=1}^{n-1}z_{j}\bar{z}_{j} + m z_{n}\bar{z}_{n}\\
&=& \sum_{j=1}^{n-1}\left(\sum_{k=1}^{\infty}A_{kj}z^k\right)\left(\sum_{k=1}^{\infty}\overline{A}_{kj}\bar{z}^k\right)
+m \left(\sum_{k=1}^{\infty}A_{kn}z^k\right)\left(\sum_{k=1}^{\infty}\overline{A}_{kn}\bar{z}^k\right)\\
&=& \sum_{j=1}^{n-1}\left(\sum_{k=1}^{\infty}A_{kj}z^k\right)\left(\sum_{k=1}^{\infty}\overline{A}_{kj}{z}^{-k}\right)
+m \left(\sum_{k=1}^{\infty}A_{kn}z^k\right)\left(\sum_{k=1}^{\infty}\overline{A}_{kn}{z}^{-k}\right)
\end{eqnarray*}
Equating Fourier coefficients, we have
$$1 = \sum_{j=1}^{n-1}\sum_{k=1}^{\infty}|A_{kj}|^2 + m\sum_{k=1}^{\infty}|A_{kn}|^2$$
Since $A_{kn}=0$ for $k=1,\dots, m-1$,
\begin{equation*}
\int_{D}u^{*}\omega = \pi \sum_{j=1}^{n}\sum_{k=1}^{\infty}k|A_{kj}|^2 \geq 
\pi \left(\sum_{j=1}^{n-1}\sum_{k=1}^{\infty}|A_{kj}|^2 + m\sum_{k=1}^{\infty}|A_{kn}|^2 \right)
=\pi.
\end{equation*}
Moreover, equality holds iff $u(z)$ is of the form $(C_1z,\dots,C_{n-1}z,C_nz^m)$. 
\end{proof}

Let the shrunken core $D_{\st}^{\kappa}$ of the handle be
$$D_{\st}^{\kappa}=\left\{\sum_{i=1}^{k}y_i^2 \leq 1-\frac{2m_0}{\kappa}, x_i=x_j=y_j=0, k+1\leq j\leq n\right\}.$$

\begin{lem}\label{insidehandle}
For a model complex structure $J$ and each $m\leq m_0$, the $m$-fold branch cover of the central vertical plane, 
$u_m$, is the only holomorphic plane asymptotic to the Reeb orbit $\gamma^m$, and passing through some point 
on  $D_{\st}^{\kappa}$ with ramification index $m$ with respect to the vertical complex direction $\C$.
\end{lem}
\begin{proof}
This argument is essentially the definition of $E_{\lambda}$-energy of $u$ in \cite{compactness} and Lemma \ref{mono}.
Let $u$ be a holomorphic plane asymptotic to $\gamma$ and passing through some point $p$ on $D_{\st}^{\kappa}$. By construction of $J$,
around each point $p$ on $D_{\st}^{\kappa}$, there is an ellipsoid $G^{m}_{\kappa}(p)$ on which
$J$ is standard. By Lemma \ref{mono}, 
$$\int_{u \cap G^{m}_{\kappa}(p)}\omega \geq \frac{m\pi}{\kappa}.$$
If $u$ is not a multiple cover of the central vertical plane, then some part of $u$ lies between $V$ and $G^{m}_{\kappa}(p)$, so
$$\int_{u \cap U}\omega > \frac{m\pi}{\kappa}.$$
Let $f_{\delta}(t)$ be a smooth function such that 
\begin{enumerate}
\item $f'_\delta(t)>0$,
\item $f_\delta(t) = e^t$ near $0$,
\item $f_\delta(t)\rightarrow 1+ \delta$ as $t \rightarrow \infty$.
\end{enumerate}

Let $\alpha_\delta$ be the 1-form such that $\alpha_\delta = \alpha$ on $U$, and $\alpha_\delta = f_\delta(t)\cdot\alpha|_{V}$ on
the cylindrical end $V\times [0,\infty)$. It is easy to see that $J$ tames $d\alpha_\delta$. 

By Stoke's Theorem 
$$\int_{u}d\alpha_{\delta}= (1+\delta)\int_{\gamma^m}\alpha|_{V} = (1+\delta)\frac{m\pi}{\kappa}.$$
Since $d\alpha_{\delta} = d\alpha = \omega$ on $U$,
$$\int_{u \cap U}\omega = \int_{u \cap U}d\alpha_{\delta} < \int_{u}d\alpha_{\delta} = (1+\delta)\frac{m\pi}{\kappa}.$$
Let $\delta \rightarrow 0$, we have a contradiction unless $u$ is the $m$-fold branch cover of the central vertical plane.
\end{proof}

\begin{rmk}
\emph{
The almost complex structure $J$ is only Lipschitz continuous. The calculation of the derivatives 
of $J$ in Lemma \ref{regularity} shows that they are discontinuous on $\gamma$: 
$\del{(1,0,\dots,0)}J$ vanishes if computed along a path 
on $\partial B_{\kappa}$, and is non-zero if computed along a path on $V$. Nevertheless $\gamma$ is the 
only place of non-smoothness. The operator $D_{u}$ is still well defined as
an operator between Sobolev spaces, and is surjective.
If one prefers to work with smooth complex structures, then choose a sequence of of smooth complex structures 
$J_{\delta} \rightarrow J$ such that each $J_{\delta}$ is a smoothing of $J$ and coincides with $J$ 
outside a $\delta$ neighbourhood of $\gamma$. The discontinuities of $J$ are in the normal directions
of the central vertical plane. Therefore $J_{\delta}$ can be chosen to coincide with $J$ on $u_0$, so $u_0$
is holomorphic for each $J_{\delta}$. Let $D_{u}^{\delta}$ be the linearized $\overline{\partial}$ operator at $u_0$
for the complex structure $J_{\delta}$. Then $D_{u}^{\delta} \rightarrow D_{u}$, thus for all sufficiently small 
$\delta$, $D_{u}^{\delta}$ is surjective, and there is a uniform bound on the right inverses of $D_{u}^{\delta}$.
It follows that $u_{0}$ is a regular holomorphic plane for all $\delta$ sufficiently small. Moreover, 
there is a uniform neighbourhood of $u_{0}$, independent of $\delta$, where there is no other holomorphic 
plane. Compactness of holomorphic curves applies to the sequence
$J_{\delta} \rightarrow J$, hence if there is no $J$-holomorphic curve in a certain region, then there is no 
$J_{\delta}$-holomorphic curves in that region for all small $\delta$. For simplicity we will work with the non-smooth $J$,
knowing that there is always a nearby smooth $J_{\delta}$ with the same properties. 
}					
\end{rmk}

We now prove Proposition \ref{tech}.

\begin{proof}[\emph{\textbf{Proof of Proposition \ref{tech}}}]
Fix $m_0$ so that for Reeb orbits of index at most $N$ are of the form $\gamma_{p}^{m}, m<m_0$, as in Proposition 
\ref{prop1}. 
We will inductively reconstruct $(M',f')$ by symplectic handle attachments.
Simultaneously we will construct a compatible complex structure $J$ on $(M, f) = (M'\times \mathbb{C}, f'+\kappa (x_{n}^2+y_{n}^2))$. 
The idea is that the new handles to be attached to $M'$ have shape parameters and Morse function extensions depending on $J$, 
so that a holomorphic plane asymptotic to $\gamma_{p}^{m}$ does not have enough energy to pass through the unstable manifolds of 
critical points of lower indices. 

Let $M'_k\subset M'$ consists of handles of index at most $k$, $f'_{k}$ be a Morse function on $M'_k$ quadratic
on handles, $J_k$ a compatible complex structure on $(M_k,f_k) =(M'_k\times \mathbb{C}, f'_{k}+\kappa(x_n^2+y_n^2))$, 
$W_k\subset M_k$ the union of the unstable submanifolds of index up to $k$, and $L_{k}$ a neighbourhood of $W_k$ such that
$J_k(p,z) = J'_k(p)\oplus i$ is split on $L_k$. We will inductively construct $(M'_k, f'_k, L_k, J_k)$.

For $k=0$, $M'_0$ is the standard index-$0$ handle together with a function $f'_0 = \sum_{j=1}^{n-1}a_j(x_{j}^2+y_{j}^2)$.
Fix $\kappa >> a_j$, and let $J_0$ be a model compatible complex structure on $M_0$. $W_0$ is the origin, 
and $L_0$ is defined to be the $G^{m_0}_{\kappa}$-ellipsoid around the origin where $J_0$ is standard. 

Attach the $(k+1)$-handles to $M'_k$ by symplectic handle attachment. We may assume that the attaching
isotropic spheres are pairwise disjoint, and for dimension reason, also disjoint from the set of Reeb orbits 
$\{\gamma_{p}\colon \indexof (p) \leq k\}$. 
Let $$U=\left\{\sum_{i=1}^{k+1}(b_i x_{i}^2 - b'_i y_{i}^2) + \sum_{j=k+2}^{n-1} a_{j}(x_{j}^2+y_{j}^2) \leq 1\right\}$$ 
be such a $(k+1)$-handle. If we extend $f'_k$ quadratically to $U$ as in Remark \ref{extendf}, 
we attach a $(k+1)$-handle (lying inside $U\times \mathbb{C}$) to $M_k$ as well. Moreover the shape of the handle is 
controlled by the choice of the Morse function extension, in particular it could be thinned in the $\mathbb{C}$-factor 
by simply increasing the critical value in $U$ to approach $1$. 
In fact any sufficiently thin shape can be achieved. Suppose $\widetilde{V}$ is a handle of the shape
$$\widetilde{V}=\left\{\sum_{i=1}^{k+1}(B_i x_{i}^2 - B'_i y_{i}^2) + \sum_{j=k+2}^{n-1} A_{j}(x_{j}^2+y_{j}^2) + C(x_n^2+y_n^2) \leq 1\right\}$$
with $C >> B_i, A_j >> B'_i$ such that 
$$V =\left\{\sum_{i=1}^{k+1}(B_i x_{i}^2 - B'_i y_{i}^2) + \sum_{j=k+2}^{n-1} A_{j}(x_{j}^2+y_{j}^2) \leq 1\right\}$$ 
lies within $U$. Then the Morse function extension on $U$,
\begin{equation}\label{eq:function}
f_{U}=\frac{\kappa}{C}\left(\sum_{i=1}^{k+1}(B_i x_{i}^2 - B'_i y_{i}^2) + \sum_{j=k+2}^{n-1} A_{j}(x_{j}^2+y_{j}^2)\right)+\left(1-\frac{\kappa}{C}\right)
\end{equation}
gives the handle $\widetilde{V}$.

The diffeomorphism attaching the handle $\widetilde{V}$ to $M_k$ is just $\widetilde{\Phi}=(\Phi, {\rm Id})$, the direct sum
of the attaching map $\Phi$ for $U$ and the identity map on the $\mathbb{C}$-factor. 
It remains unchanged as we vary the shape of $\widetilde{V}$. Modify $J_k$ in a small neighborhood of the unstable disks 
flowing from the isotropic spheres on $\partial M_k$ such that $J_k$ becomes split, i.e., 
$J_k(p,z) = J_k(p)\oplus i$ for $p\in M'$ and $|z|<\epsilon$. Since $J_k$ is already assume to be split on $L_k$, this can
be done away from $L_k$. 

Choose one such thin $\widetilde{V_i}$ for each $(k+1)$-handle, define $J$ on $M_k\cup \{\widetilde{V_i}\}$ as follows:
\begin{enumerate}
\item
on $M_k$, $J=J_{k}$;
\item
on each $\widetilde{V_i}$, $J$ equals a model complex structure in the region $\{\sum_{i=1}^{k+1}y_i^2 \leq \frac{9}{10}\}$;
\item
on each $\widetilde{V_i}$, $J$ interpolates between the complex structure $(\widetilde{\Phi}_i)_{*} (J_k)$ and the model complex structure
in the region $\{\frac{9}{10} < \sum_{i=1}^{k+1}y_i^2 \leq 1\}$.
\end{enumerate}

Since $J_k$ is modified to be split near the isotropic spheres, and the attaching maps $\widetilde{\Phi}_i$ 
is split, $(\widetilde{\Phi}_i)_{*} (J_k)$ is split. It follows that the interpolating
complex structure can also be chosen to be split in some neighbourhood of the core of the handle.

Pick a Riemmanian metric $g'$ on $M'_k\cup \{U_i\}$ which is Euclidean on each $U_i$, and let $g$ be the product of $g'$ with the Euclidean
metric on $\mathbb{C}$. Choose $r_0$ sufficiently small such that $L_k$ contains an injective $r_0$-ball around each point of $W_k$. 
By Monotonicity, there is a constant $C_0$ such that for all $J_k$-holomorphic curve $u$ passing through $p\in W_k$, 
\begin{equation}\label{eq:est1} 
\int_{u\cap B_{r_0}(p)}\omega > C_{0}r_{0}^2
\end{equation}

On $M_k\cup \{\widetilde{V_i}\}$, choose $r_1$ sufficiently small such that there is an injective
$r_1$-ball around each point of $W_{k+1}$, and $J$ is split inside every such $r_1$-ball. Again by Monotonicity,
there is a constant $C_1$ such that for all $r<r_1$ and all $J$-holomorphic curves $u$ passing through $p\in W_{k+1}$, 
\begin{equation}\label{eq:est2}
\int_{u\cap B_{r}(p)}\omega > C_1r^2
\end{equation}

Now we thin all the $\widetilde{V_i}$'s as follows:
\begin{enumerate}
\item
$B'_i=\frac{100m_0}{C_1}, 1\leq i\leq k+1$;
\item
$B_i, A_j, C >> \frac{100m_0}{C_1}$, so that the handle lies inside each $\widetilde{V_{i}}$;
\item
$C >> m_0B_i, m_0A_j, \frac{m_0\pi}{C_0r^{2}_{0}}, \frac{m_0\pi}{C_1r_{1}^2}$, and 
sufficiently large such that there is an injective $\sqrt{\frac{m_0\pi}{C_{1}C}}$-ball
around each $p\in W_{k+1}$ outside the region $\{\sum_{i=1}^{k+1}y_i^2 < \frac{8}{10}\}$ in all $(k+1)$-handles.
\end{enumerate}

Note that at the origin the handle has thickness $\frac{1}{\sqrt{C}}$. The choice of $B'_i$  
ensures the ratio between thickness of the handle at $\{\sum_{i=1}^{k+1}y_i^2 = 8/10\}$ and the 
thickness at origin is sufficiently large, so that such a choice of $C$ exists.

Use this choice of $\{B_i, B'_i, A_j\}$ for all $(k+1)$-handles of $M'_k$, and extend the Morse function
$f'$ to $M'_{k+1}=M'_{k}\cup \{V_{i}\}$ as in \eqref{eq:function}. Let $L_{k+1}\subset M_{k+1}$ be the union of $L_{k}$,
the $\sqrt{\frac{m_0\pi}{C_{1}C}}$-neighbourhood around each $p\in W_{k+1}$ outside the region 
$\{\sum_{i=1}^{k+1}y_i^2 < \frac{8}{10}\}$ of all $(k+1)$-handles, and the $G_{C}^{m_0}$-ellipsoid neighbourhood
around the core isotropic disks $D_{\st}$ of all $(k+1)$-handles. Let $J_{k+1}=J$ on $L_{k+1}$, then
extend $J_{k+1}$ to a compatible complex structure. Note that $J_{k+1}$ is standard in the the 
$G_{C}^{m_0}$-ellipsoid neighbourhood around $\{\sum_{i=1}^{k+1}y_i^2 < 9/10\}$ in each handle, and we may take an
extension of $J_{k+1}$ which is a model complex structure in the region $\{\sum_{i=1}^{k+1}y_i^2 < 9/10\}$.
This completes the induction step.

Suppose $u$ is a holomorphic plane asymptotic to $\gamma_{p}^{m}$, where $p$ is an index-$(k+1)$ critical point,
$m< m_0$, and $u$ passes through a point on $W_k$. By construction, 
$\gamma_{p}^{m}$ has action $\frac{m\pi}{C}<C_0r_{0}^{2}$. However by \eqref{eq:est1},
\begin{equation}\label{est3}
\int_{u\cap L_{k+1}}\omega > C_{0}r_{0}^{2}. 
\end{equation}
Using the same argument as in Lemma \ref{insidehandle}, this is a contradiction.

Since $J_{k+1}$ is a model complex structure inside each $(k+1)$-handle, by Lemma \ref{regularity}, the $m$-fold
cover of the vertical plane over an index-$(k+1)$ critical point $p$ is a regular holomorphic plane asymptotic to 
$\gamma_{p}^m$, which passes through $p\in W_{k+1}$ with ramification index $m$. Let $u$ be a holomorphic 
plane asymptotic to $\gamma_{p}^m$
and passes through $W_{k+1}$. If $u$ passes through a point in $W_{k+1}$ outside the region 
$\{\sum_{i=1}^{k+1}y_i^2 < \frac{8}{10}\}$ in all $(k+1)$-handles, then \eqref{eq:est2} implies
\begin{equation}\label{est4}
\int_{u\cap L_{k+1}}\omega > C_1\left(\sqrt{\frac{m_0\pi}{C_{1}C}}\right)^2 > \frac{m\pi}{C}.
\end{equation}
Also a contradiction. If $u$ passes through $W_{k+1}$ in the region $\{\sum_{i=1}^{k+1}y_i^2 < \frac{9}{10}\}$,
then Lemma \ref{insidehandle} shows that $u$ can only be the $m$-fold branch cover of the vertical plane over $p$.
 
Note that for the energy lower bounds, we only use the complex structure inside $L_{k+1}$, there is 
no restriction on $J_{k+1}$ outside of $L_{k+1}$. Furthermore $L_{k+1}\cap \partial M_{k+1}$ consists of all
the Reeb orbits $\gamma_{p}$ over critical points of index $\leq k+1$. $L_{k+1}$ otherwise lies
in the interior of $M_{k+1}$.

At the end of the induction we have $(M',f',L,J)$ and $(M, f)=(M', f'+\kappa(x_n^2+y_n^2))$, such that
$(M,\partial M, J)$ satisfies parts (4) and (5) of Proposition \ref{tech}. However to satisfy Proposition \ref{prop2}, 
we need to increase $\kappa$ by some large factor $K$ (which depends on $J|_{\partial M}$), as described 
in Remark \ref{rmk3}. Increasing $\kappa$ has the effect of shrinking $M$ in the $\mathbb{C}$-factor.
Let $(M_{K},f_{K})=(M', f'+K\kappa(x_n^2+y_n^2))$, $\Psi_{K}$ be the dilation by 
$\frac{1}{\sqrt{K}}$ in the $\mathbb{C}$-factor, and $J_{K}$ defined on the contact distribution on $\partial M_{K}$ by
pushing forward $J_{K}$ on $\partial M$. Let $L_{K}=\Psi_{K}(L)\subset M_K\subset M$. To define $J_K$ in the filling,
let $J_K = J$ on $L_K$ and then interpolate between $L_K$ and $\partial M_K$. 

Each orbit $\gamma_{p}^m$ on $M_K$ has action equal to $\frac{1}{K}$-th of its action on $M$. Whenever there is an injective
$G_C^{m_0}$-ellipsoid in $L$, there is an injective $G_{KC}^{m_0}$-ellipsoid in $L_{K}$. Since $J_K=J$ on $L_K$, the
Monotonicity constants are unchanged. Therefore the inequalities \eqref{est3} and \eqref{est4} still holds with $L_K$
in place of $L$, and a factor of $\frac{1}{K}$ on both sides. It follows that $(M_K,J_K)$ still satisfies (4) and (5) of 
Proposition \ref{tech}. 

Finally by construction $L$ has the split complex structure as required by part (3) of Proposition \ref{tech}, and it is clear
that one can choose a compatible complex structure which makes $M'$ a complex submanifold (since $M'\cap L$ is already a
complex submanifold, and there is no restriction on $J$ outside $L$). This completes the proof.
\end{proof}

\section{Bourgeois--Oancea Exact Sequence} 
In \cite{Oancea}, Bourgeois and Oancea proved a long exact sequence between linearized contact homology and
symplectic homology, and deduced Theorem \ref{yau1}. 

\begin{thm}[\cite{Oancea}]
There is a long exact sequence
$$\cdots \longrightarrow SH^{+}_{*-(n-3)}(M)\longrightarrow HC_{*}(\partial M) \stackrel{D}{\longrightarrow}
HC_{*-2}(\partial M) \longrightarrow SH^{+}_{*-1-(n-3)}(M)\longrightarrow \cdots$$
\end{thm}

It was not immediately clear how the explicit construction in \cite{Yau} is related to the more 
abstract approach of Bourgeois and Oancea. In this section, for the special setup of contact form and compatible
complex structure given by Proposition \ref{tech}, we determine the degree-$2$ map $D$ in the Bourgeois--Oancea exact sequence.

Following \cite{Oancea}, the differential $D$ can described exclusively in terms of holomorphic 
curves in the symplectization of $\partial M$. Suppose we are in a setup given by Proposition \ref{tech},
so all Reeb orbits are of the form $\gamma_p^m$. Since each orbit is $S^1$-invariant under the rotation
in the vertical $\C$ factor, its geometric image can be canonically identified with the unit circle $S$ on
$\C$. For each orbit $\gamma$ choose a point $e^{i\theta_{\gamma}}\in S, \theta_{\gamma}\in(0, 2\pi)$,
such that if $\gamma$ is an orbit above a critical point $p$, $\gamma'$ an orbit above $p'$, and 
$\indexof(p)>\indexof(p')$, then $\theta_{\gamma} > \theta_{\gamma'}$ (regardless of their multiplicities).

Choose a global polar coordinate $\C\setminus \{0\}$ for a cylinder. Let $L$ be the positive real axis.
Given a parametrized holomorphic cylinder between $\gamma$ and $\gamma'$,
$u = (u', a)\colon \C\setminus \{0\}\rightarrow \partial M\times \R$, define
$\ev^+(u) = \lim_{z\rightarrow \infty, z\in L}u'(z) \in \gamma \cong S$ and 
$\ev^-(u) = \lim_{z\rightarrow 0, z\in L}u'(z) \in \gamma' \cong S$.

The map $D$ is induced by the chain level map
\begin{equation}\label{eq:def1}
\Delta(\gamma)=\frac{1}{\kappa_{\gamma'}}\sum_{\overline{\mu}(\gamma')=\overline{\mu}(\gamma)-2} c_{\gamma,\gamma'}\gamma', 
\end{equation}
where $c_{\gamma,\gamma'}$ is the sum of counts of two types of moduli spaces:
\begin{enumerate}
\item the moduli space $\M_{1}$ of parametrized holomorphic cylinders $u$ 
asymptotic to $\gamma$ and $\gamma'$ such that $\ev^+(u)=e^{i\theta_{\gamma}}$, and 
$\ev^-(u)=e^{i\theta_{\gamma'}}$.
\item the moduli space $\M_2$ of parametrized broken holomorphic cylinders $(u_1,u_2)$
such that $\ev^+(u_1) = e^{i\theta_{\gamma}}$, $\ev^-(u_2)=e^{i\theta_{\gamma'}}$. Furthermore, on the 
intermediate breaking Reeb orbit $\beta$ with $\overline{\mu}(\beta)=\overline{\mu}(\gamma)-1$,
$\{\ev^-(u_1), e^{i\theta_{\beta}}, \ev^+(u_2)\}$ lie in clockwise order.
\end{enumerate}

\begin{lem}\label{lem01}
In a setup given by Proposition \ref{tech}, the contribution from $\M_2$ is zero.
\end{lem}
\begin{proof}
By Proposition \ref{prop2} we have a complete understanding of the moduli spaces of holomorphic
cylinders between Reeb orbits of index difference $1$. They are $S^1$-invariant, run 
between $\gamma_p^m$ and $\gamma_p'^m$ with $\indexof(p) = \indexof(p') - 1$, and
are lifts of trajectories of a gradient-like vector field $Z$ between $p$ and $p'$.
For any
$S^1$-invariant cylinder $u$, $\ev^+(u)=\ev^-(u)$. Hence $\gamma, \beta, \gamma'$ are
Reeb orbits of the same multiplicity over critical points of increasing Morse index.
By the choice of $\theta_{\gamma}$,
$\{\ev^-(u_1), e^{i\theta_{\beta}}, \ev^+(u_2)\} = 
\{e^{i\theta_{\gamma}}, e^{i\theta_{\beta}}, e^{i\theta_{\gamma'}}\}$ indeed lie in clockwise
order. Hence every broken cylinder in $\partial^2(\gamma)$ in the cylindrical contact
homology differential counts. But in our setup the cylindrical contact
homology differential coincides with the Morse cohomology differential, so $\partial^2(\gamma)$
vanishes.
\end{proof}

\begin{rmk}
\emph{
Requiring $\ev^+(u)=e^{i\theta_{\gamma}}$ picks out a parametrized representative for 
each unparametrized holomorphic cylinder in $\M_{\gamma,\gamma'}$ (strictly speaking
there are $\kappa_{\gamma}$ representatives, but as in Remark \ref{carpet} we will
ignore the combinatorial factors). Then $\ev^{-}$ can be viewed as a map from
$\M_{\gamma,\gamma'}$ to $\gamma' \cong S$. Equivalently requiring 
$\ev^-(u)=e^{i\theta_{\gamma'}}$ turns $\ev^{+}$ into a map 
$\M_{\gamma,\gamma'} \rightarrow \gamma \cong S$.
}
\end{rmk}

If $\overline{\mu}(\gamma')=\overline{\mu}(\gamma)-2$, then $\M_{\gamma,\gamma'}$ is
1-dimensional, and $\M_1$ consists of solutions of 
$\ev^{-}(u) = e^{i\theta_{\gamma'}}$ in $\M_{\gamma,\gamma'}$. $\M_{\gamma,\gamma'}$
has of two types of connected components. Let 
$\M_{\gamma,\gamma'}^{S^1}$ be the union of the connected components of 
$\M_{\gamma,\gamma'}$ homeomorphic to a circle, 
and $\M_{\gamma,\gamma'}^{[0,1]}$ the union of those homeomorphic to an 
interval.

\begin{lem}\label{lem02}
In a setup given by Proposition \ref{tech}, $\M_{\gamma,\gamma'}^{[0,1]}$ does
not contain any solution of $\ev^{-}(u) = e^{i\theta_{\gamma'}}$.
\end{lem}
\begin{proof}
Again by Proposition \ref{prop2} we completely understand 
$\M_{\gamma,\gamma'}^{[0,1]}$. The endpoint of the intervals are rigid 
broken cylinders, which are lifts of rigid broken trajectories of $Z$. By standard
Morse theory there are several $1$-parameter families of trajectories of $Z$ which connect pairs
of rigid broken trajectories. They lift to $1$-parameter families of $S^1$-invariant
cylinders which connects the rigid broken cylinders in pairs. Since $S^1$-invariant 
cylinders are regular, these all of $\M_{\gamma,\gamma'}^{[0,1]}$. Hence 
$\M_{\gamma,\gamma'}^{[0,1]}$ consist entirely of $S^1$-invariant cylinders, so
$\ev^{-}(u) = \ev^{+}(u) = e^{i\theta_{\gamma}} \neq e^{i\theta_{\gamma'}}$.
\end{proof}

Therefore by Lemmas \ref{lem01} and \ref{lem02}, the coefficient $c_{\gamma,\gamma'}$
in \eqref{eq:def1} is the same as the winding number of 
$\ev^{-}\colon \M_{\gamma,\gamma'}^{S^1} \rightarrow S$.

\begin{prop}\label{D}
If $M$ is a subcritical Stein manifold with vanishing $c_1$, $a\in HC(\partial M)$, 
$\theta$ a compactly supported closed form on $M$, and $D$ the degree-$2$ differential
in the Bourgeois--Oancea exact sequence, then
\begin{equation}\label{eq:D}
l!\int_{\mathcal{M}_{a}}\ev^*(\theta)\wedge \psi^{l} = (l-1)!\int_{\mathcal{M}_{D(a)}}\ev^*(\theta)\wedge \psi^{l-1}
\end{equation}
\end{prop}
\begin{proof}
Take a set up given by Proposition \ref{tech}. Represent the Poincar\'{e} dual of $\theta$ by a
suitable cycle $\alpha$ and interpret the descendants as counts of curves with ramification indices
condition at the constrained marked point. We will prove \eqref{eq:D} for $l=1$, the proof is identical
for higher values of $l$.

Let $\sum_{i=1}^{k} c_i\gamma_i$ be a representative of $a$. 
We are interested in the holomorphic planes asymptotic to $\gamma_i$, 
and passing through $\alpha$ with ramification index $2$. $\M_{\gamma_i}(\alpha)$, the moduli space of
holomorphic planes asymptotic to $\gamma_i$ 
and passing through $\alpha$, is then $2$-dimensional.

First we trivialize the tautological bundle $L$ over $\M_{\gamma_i}(\alpha)$.
As explained in Section $2$, this amounts to choosing a parametrized map 
$u\colon \C\rightarrow M=M'\times \C$ over each element of $\M_{\gamma_i}(\alpha)$
(more precisely this gives a trivialization of the dual of $L$ and hence $L$).
Similar to the cylinder case, requiring that $\ev^{+}(u) = e^{i\theta_{\gamma_i}}$
kills the $S^1$ component of the automorphism group. To completely fix the complex structure
of the domain, require $u(1)\in M'\times \C$ to have modulus $1$ in the $\C$ component.

This trivialization induces a map $f\colon \M_{\gamma_i}(\alpha)\rightarrow \C$, 
given by the derivative at $0$ of $\pi\circ u$ at $0$. Then
$\int_{\mathcal{M}_{\gamma_i}}\ev^*(\theta)\wedge \psi$ is the number of zeroes of
$f$. Since $\M_{\gamma_i}(\alpha)$ is $2$-dimensional, its boundary consists of a 
collection of circles, and the number of zeroes of $f$ is equal to the winding number
of $f|_{\partial \M_{\gamma_i}(\alpha)}$ around $0$.

The boundary of $\M_{\gamma_i}(\alpha)$ consists of $2$-story curves
$(u_1, u_2)$ where $u_1$ is a holomorphic cylinder in the symplectization $\partial M\times \R$,
and $u_2$ is a holomorphic plane in $M$. The total index of the curve is $2$. There are two
possibilities depending on how the total index is distributed to the two levels.
\begin{enumerate}
\item
$(u_1,u_2)$ has index $(1,1)$: $u_1$ is a rigid (up to $\R$-translation) cylinder in 
$\M_{\gamma_i,\gamma'}$, and $u_2$ is in the $1$-parameter family of planes $\M_{\gamma'}(\alpha)$.
\item
$(u_1,u_2)$ has index $(2,0)$: $u_1$ is part of the $1$-parameter family of cylinders $\M_{\gamma_i,\gamma'}$,
and $u_2$ is a rigid planes in $\M_{\gamma'}(\alpha)$.
\end{enumerate}

A connected component of $\partial \M_{\gamma_i}(\alpha)$ can have curves of both types. Consider
a $3$-story curve $(v_1,v_2,v_3)$ where $v_1, v_2$ are rigid holomorphic cylinders in the symplectization
and $v_3$ is a rigid plane in the filling. If we glue $v_1$ and $v_2$ we have a curve of the first
type, and if we glue $v_2$ and $v_3$ we have a curve of the second type. This is the only way to pass from
one type to the other.

However in our special setting, there is in fact no curves of the first type in $\partial \M_{\gamma_i}(\alpha)$.
Recall the proof of Theorem \ref{main}, $\int_{\mathcal{M}_{\gamma_i}}\ev^*(\theta)\wedge \psi$ is
non-zero only if $\gamma_i = \gamma_p^2$ and $\indexof(p)=\dim(\alpha)$. By Proposition \ref{prop2},
$\gamma'=\gamma_q^2$ where $\indexof(q)=\indexof(p)+1$. But then $\M_{\gamma'}(\alpha)$ is empty
by part (4) of Proposition \ref{tech}.

Furthermore, if $(u_1,u_2)$ is a curve of the second type in $\partial \M_{\gamma_i}(\alpha)$, then
$u_1$ must be in $\M_{\gamma_i,\gamma'}^{S^1}$. If $u_1\in \M_{\gamma_i,\gamma'}^{[0,1]}$, then the 
boundary of the interval component that $u_1$ lies in, is a rigid broken cylinder. 
By the argument in the previous paragraph, this implies $\M_{\gamma'}(\alpha)$ is empty.

Therefore each component $S$ of $\M_{\gamma_i,\gamma'}^{S^1}$, together with a rigid plane 
$u_2\in\M_{\gamma'}(\alpha)$, gives a component $\widetilde{S}$ of $\partial \M_{\gamma_i}(\alpha)$. All
components of $\partial \M_{\gamma_i}(\alpha)$ arise this way.

The winding number of $f$ on such a boundary component $\widetilde{S}$ is the same as 
the winding number of $\ev^{-}$ on $S$. Since as $u_1$ varies on $S$, we need to parametrize 
the plane $u_2$ by rotation such that $\ev^{+}(u_2) = \ev^-(u_1)$, hence the derivative
at $0$ rotates accordingly. It follows that
$$\#\{\M_{\gamma_i}((\alpha,2))\} = \sum_{\overline{\mu}(\gamma')=\overline{\mu}(\gamma)-2} c_{\gamma_i,\gamma'} \#\{\M_{\gamma'}((\alpha,1))\}. $$
In other words
\[\int_{\mathcal{M}_{a}}\ev^*(\theta)\wedge \psi = \int_{\mathcal{M}_{D(a)}}\ev^*(\theta).
\qedhere\]
\end{proof}

Theorem \ref{coherent} immediately follows from Proposition \ref{D}.

\section*{Acknowledgements}
The author is deeply in debt to Y. Eliashberg for years of guidance, discussions
and ideas. He also thanks K. Honda, F. Bourgeois, and S. Lisi for many helpful 
suggestions and valuable comments.

\newpage

\end{document}